\setlist[description]{noitemsep, topsep=2pt}
\setlist[enumerate]{noitemsep, topsep=0pt}
\numberwithin{equation}{section}
\crefname{section}{Section}{Sections}
\crefname{subsection}{Subsection}{Subsections}
\crefname{condition}{Condition}{Conditions}
\crefname{hypothesis}{Hypothesis}{Conditions}
\crefname{assumption}{Assumption}{Assumptions}
\crefname{lemma}{Lemma}{Lemmas}
\crefname{claim}{Claim}{Claims}
\crefname{conclusion}{Conclusion}{Conclusions}
\newtheorem{thm}{Theorem}[section]
\newtheorem{lem}[thm]{Lemma}
\newtheorem{rem}[thm]{Remark}
\numberwithin{equation}{section}
\newcommand{\R}{\mathbb{R}}
\newcommand{\N}{\mathbb{N}}
\newcommand{\Om}{\Omega}
\newcommand{\hh}{\mathcal{H}}
\let\oldtocsection=\tocsection
\let\oldtocsubsection=\tocsubsection
\let\oldtocsubsubsection=\tocsubsubsection
\renewcommand{\tocsection}[2]{\hspace{0em}\oldtocsection{#1}{#2}}
\renewcommand{\tocsubsection}[2]{\hspace{1em}\oldtocsubsection{#1}{#2}}
\renewcommand{\tocsubsubsection}[2]{\hspace{2em}\oldtocsubsubsection{#1}{#2}}
\newcommand{\redref}[2]{\texorpdfstring{\protect\hyperlink{#1}{\textcolor{black}{(}\textcolor{red}{#2}\textcolor{black}{)}}}{}}
\newcommand{\redlabel}[2]{\hypertarget{#1}{\textcolor{black}{(}\textcolor{red}{#2}\textcolor{black}{)}}}
\newcommand{\descitem}[2]{\item[\textcolor{black}{(#1)}] \label{#2}}
\newcommand{\descref}[2]{\hyperref[#1]{\textcolor{black}{(}\textcolor{blue}{\bf #2}\textcolor{black}{)}}}
\newcommand{\descitemnb}[2]{\item[\textcolor{black}{#1}] \label{#2}}
\g@addto@macro\normalsize{%
  \setlength\abovedisplayskip{3pt}
  \setlength\belowdisplayskip{3pt}
  \setlength\abovedisplayshortskip{1pt}
  \setlength\belowdisplayshortskip{3pt}
}
\begin{document}
\title[]
 {Twice differentiability of solutions to fully nonlinear parabolic equations near the boundary}

 \author{Karthik Adimurthi}
 \address{Tata Institute of Fundamental Research\\
Centre For Applicable Mathematics \\ Bangalore-560065, India}\email[Karthik Adimurthi]{karthikaditi@gmail.com \and kadimurthi@tifrbng.res.in}

 \author{Agnid Banerjee}
\address{Tata Institute of Fundamental Research\\
Centre For Applicable Mathematics \\ Bangalore-560065, India}\email[Agnid Banerjee]{agnidban@gmail.com}

\author{Ram Baran Verma}
\address{Tata Institute of Fundamental Research\\
Centre For Applicable Mathematics \\ Bangalore-560065, India}\email[Ram Baran Verma]{rambv88@gmail.com}

 \thanks{Second author is supported in part by SERB Matrix grant MTR/2018/000267}

\subjclass[2010]{Primary 35J60, 35D40.}

\begin{abstract}
In this paper, we prove $\hh^{2+\alpha}$ regularity for viscosity solutions to non-convex fully nonlinear parabolic equations  near the boundary.  This constitutes the parabolic  counterpart   of  a similar $C^{2, \alpha}$ regularity result  due to Silvestre and Sirakov  proved in \cite{SS}  for solutions  to  non-convex fully nonlinear elliptic equations. 

\end{abstract}

\maketitle

\tableofcontents

\section{Introduction}
In this paper, we study the boundary regularity of viscosity solutions to fully nonlinear parabolic equations of the type
\[
F(D^2u, Du, x, t)= f(x,t),
\]
with appropriate  structural assumptions  on $F$ as defined in \cref{section2}. In order to put things in the right historical perspective, we note that recently in \cite[Theorem 1.3]{SS}, Silvestre and Sirakov  proved that  viscosity solutions to fully  nonlinear elliptic equations of the type 
\begin{equation*}
\begin{cases}
F(D^2 u,Du, x) = f(x)\quad \text{in $\Omega$},
\\
u=g\quad \text{on $\partial \Omega$},
\end{cases}
\end{equation*}
are $C^{2, \alpha}$ in a small neighborhood of  the boundary provided $F,f$ and the boundary conditions are sufficiently regular.  We note that solutions to the above equations do not have the same regularity in the interior of the domain because a counterexample due to Nadirashvili and Vladut \cite{NV}, which shows that the best apriori regularity available for solutions to such equations  is only $C^{1,\alpha}$ even in the case when $F$ is smooth. 

In the case when $F$ is additionally concave (or convex),  we  note that the $C^{2, \alpha}$  interior regularity result is the well-known Evans-Krylov theorem proved in \cite{Ev,Kr} (see also \cite{CC}). Therefore the  regularity result in \cite{SS}  demonstrates that the singularities   can only occur \emph{"far"} from the boundary.   
The proof  of the boundary regularity  in \cite{SS} relies on first showing a $C^{2, \alpha}$ type decay at boundary points. The authors subsequently combine  such a decay estimate with a "regularity under smallness" result of Savin in \cite{sa} (more precisely,  using a  certain generalization of Savin's result  proved in \cite{dT}), to obtain $C^{2}$ regularity in a  neighborhood of the boundary.   

We  note that such  a  $C^{2,\alpha}$ type  boundary decay estimate is first established for homogeneous equations. This  is done using the  Krylov type  boundary $C^{1, \alpha}$  estimates as in \cite{Kr1} (which in \cite{SS} is established for viscosity solutions as in Theorem 1.1 of their paper) applied to the  directional derivatives which belong to the Pucci class which crucially uses the translation invariance of the PDE,   followed by  a clever  argument based on barriers. The passage from the homogeneous to the more general equations is then based on a perturbation argument which  uses a "quantitative" approximation lemma (more specifically, see \cite[Lemma 4.2]{SS}).     We note that such a boundary regularity result,  besides being of independent interest has also found  interesting applications to  unique continuation of  fully nonlinear elliptic equations as in \cite{AS}. See also \cite{Ba} for further generalizations in this direction.  Moreover the boundary regularity result also plays an important role  in the context  of overdetermined boundary value problems for fully nonlinear equations  as in \cite{SS1}. 

\medskip

In this paper, we establish the  parabolic analogue of the  such a  regularity result  of \cite{SS} as proved in   \cref{mn1}, \cref{mn2} and \cref{mn4}. Similar to the elliptic case, the proof  of our main result is based on first establishing an appropriate $\hh^{2+\alpha}$ decay at the boundary for the homogeneous equations by a suitable  adaptation of the ideas from \cite{SS} to the parabolic case. The passage to the more general equations is however different from that in \cite{SS} and is instead  based on a compactness argument inspired by the ideas from the fundamental paper of Caffarelli, see \cite{Ca}.  Such a $\hh^{2+\alpha}$ type decay is then combined with the "flatness implies smoothness" result of Savin, proved for the parabolic analogue by  Yu Wang in \cite{Wa}.  Finally under suitable compatibility conditions at the "corner" points of the parabolic boundary (a result due to Lihe Wang in \cite{W1}), which guarantees  a similar $\hh^{2+\alpha}$ decay  at initial points,  we obtain $\hh^{2+\alpha}$ regularity in a full neighborhood of the parabolic boundary, see \cref{mn4}.

The paper is organized as follows: in \cref{section2}, we introduce  the relevant notions and gather some known results that are relevant to this  work and in \cref{section3}, we prove our main results.

\section{Notations, preliminaries and known results}\label{section2}
In this section, we introduce some basic notations and gather some preliminaries and known results.  A generic point in  the space time $\mathbb{R}^n \times \mathbb{R}$ will be  denoted by $(x,t)$, the euclidean ball in $\mathbb{R}^n$ of radius $r$ centered at $x$ will be denoted by $B_r(x)$ and  $Q_r(x,t)$ will denote  the parabolic cylinder of size $r$  in space time  defined by
\[
Q_r(x,t):= B_r(x) \times (t-r^2, t].
\]
When $(x,t)=(0,0)$, we  will often denote such a set by $Q_r$.  We will  use the notation  $Q^{+}_{r}$ to be the set $B^{+}_{r} \times (-r^{2},0]$ and by  $Q^{0}_{r}$ to be the set $B^{0}_{r}\times(-r^{2},0]$ where  $B^{+}_{r}:=\{x\in B_{r}~~|~~x_{n}> 0\}$ and $B^{0}_{r}:=\{x\in B_{r}~~|~~x_{n}=0\}$.  Hereafter,  the notation $S_n$ will indicate the space of $n \times n$ symmetric matrices.  The parabolic H\"older  spaces  will be denoted by $\hh^{k+\alpha}$ (see \cite[Chapter 4]{Li} for the details). 

\begin{rem}\label{boundary_definition}Since we are working with time dependent parabolic cylinders, we will follow the notation from \cite{Li} to define the different boundaries, $C\Om$ will be the corner boundary, $B\Om$ will be the bottom bounday, $S\Om$ to be the lateral bounday and $\mathcal{P}\Om$ to be the full parablic boundary. Since these definitions are quite standard, we shall refrain from describing them in detail and instead refer the reader to see  \cite[page 7, Chapter 2]{Li} for the precise descriptions. 
    \end{rem}

As mentioned in the introduction, in this paper  we consider the regularity  upto the boundary for the  following  Dirichlet-boundary value problem
\begin{equation*}
\begin{cases}
-u_{t}+F(D^{2}u,Du,x,t)=f(x,t)~\quad\text{in}~\Omega,
\\
u= g\quad \text{on $\mathcal{P}\Om$},
\end{cases}
\end{equation*}
where $\Omega\subset\R^{n+1}$ is a bounded smooth domain and  $\mathcal{P}\Om$ denotes the parabolic boundary of $\Omega$ (see \cref{boundary_definition}).  We remark that all functions considered in this paper are at least continuous in $\overline{\Omega}$.   Let us now define the  main structural hypothesis on $F$:
\begin{description}
\descitem{H1}{H1} There exist numbers $0< \lambda\leq\Lambda$,  and $K\geq0$ such that for any $(x,t)\in \overline{\Omega}$, $p,q\in\R^{n}$ and $M,N\in S(n)$, the following is satisfied:
\begin{equation*}
\mathcal{M}^{+}_{\lambda,\Lambda}(M-N)-K|p-q|\leq F(M,p,x,t)-F(N,q,x,t)\leq \mathcal{M}^{+}_{\lambda,\Lambda}(M-N)+K|p-q|.
\end{equation*}

\descitem{H2}{H2} There exist $\overline{\alpha}> 0$ and $\overline{C}> 0$ such that for all $M\in S(n)$ $p\in\R^{n}$ $(x,t),(y,s)\in\overline{\Omega}$ we have
\begin{equation*}
|F(M,p,x,t)-F(M,p,y,s)|\leq \overline{C}(|M|+|p|)(|x-y|+|t-s|^{1/2})^{\bar{\alpha}}.
\end{equation*}
\end{description}

We note that in \descref{H1}{H1},  $\mathcal{M}^{\pm}_{\lambda,\Lambda}$ denotes the Pucci extremal operators defined as
\begin{equation*}
\begin{cases}
\mathcal{M}^{+}_{\lambda, \Lambda}(M) = \Lambda \sum_{e_i >  0} e_i + \lambda \sum_{e_i <  0} e_i,
\\
\mathcal{M}^{-}_{\lambda, \Lambda}(M) = \lambda \sum_{e_i >  0} e_i + \Lambda \sum_{e_i <  0} e_i,
\end{cases}
\end{equation*}
where $\{e_i\}_{i=1}^n= \{e_i(M)\}_{i=1}^n$ indicate the eigenvalues of $M$.

\begin{rem}\label{notation_S}
    We shall use the standard notation $S(\lambda,\Lambda)$ to denote the class of continuous functions $v \in C^0$ which solves the following inequality in the viscosity sense:
    \begin{equation*}
        \mathcal{M}^{+}_{\lambda, \Lambda}(D^2 v) - v_t\  \geq \ 0 \ \geq \mathcal{M}^{-}_{\lambda, \Lambda}(D^2 v) - v_t.
    \end{equation*}
\end{rem}

\subsection{Known boundary \texorpdfstring{$\hh^{1+\alpha}$}. regularity result}
Since the techniques are perturbative in nature, we shall recall a $\hh^{1+\alpha}$ boundary regularity for viscosity solutions (see \cite{W} or \cite{Imbert} for the precise definition) of 
\begin{equation}\label{def1}
~~\left\{
\begin{aligned}{}
-v_{t}+\mathcal{M}^{+}_{\lambda,\Lambda}(D^{2}v)+K|Dv|&\geq-L~\quad\text{in}~Q^{+}_{1},\\
-v_{t}+\mathcal{M}^{-}_{\lambda,\Lambda}(D^{2}v)-K|Dv|&\leq L~\quad\text{in}~Q^{+}_{1},
\end{aligned}
\right.
\end{equation} 
where $K,L\geq0$ and   $\mathcal{M}^{\pm}_{\lambda,\Lambda}$ denote the extremal Pucci operators.  We now state  the first relevant result on $\hh^{1+\alpha}$ regularity at lateral  boundary for solutions to \cref{def1}, see for instance  \cite[Theorem 2.1]{W1}(See also \cite{MMW}). 
\begin{thm}\label{thm_deff}
Let $v\in C(\overline{Q^{+}_{1}})$ be a viscosity solution of \cref{def1}  with $v=g$ on $Q^{0}_1$ such that $g\in \hh^{1+\bar{\alpha}}(Q^{0}_{1})$,  for some $\bar{\alpha}> 0$.  Then there exists $\alpha\in(0,\bar{\alpha})$ and  a function  $G\in \hh^{\alpha}(Q^{0}_{1/2},\R^{n})$, called the "gradient" of $v$ at the boundary  such that the following holds:
\begin{equation*}
\|G\|_{\hh^{\alpha}(Q^{0}_{1/2})}\leq C(\|v\|_{L^{\infty}(Q^{+}_{1})}+L+\|g\|_{\hh^{1+\bar{\alpha}}(Q^{0}_{1})}).
\end{equation*}

Moreover for  every $(x_{0},t_{0})\in Q^{0}_{1/2}$ we have
\begin{equation*}
|v(x,t)-v(x_{0},t_{0})-\langle G(x_{0},t_{0}),x-x_{0}\rangle|\leq C(|x-x_0| + |t-t_0|^{1/2})^{1+\alpha},
\end{equation*}
whenever $(x,t) \in Q_{1/2}^+$ with $t \leq t_0$. Here $C= C(\|v\|_{L^{\infty}(Q^{+}_{1})},L,\|g\|_{\hh^{1+\bar{\alpha}}(Q^{0}_{1})}, n, \lambda, \Lambda) $.
\end{thm}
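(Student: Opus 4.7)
The plan is to establish the pointwise $\hh^{1+\alpha}$ expansion at each boundary point via an iterative improvement-of-flatness argument, and then upgrade pointwise expansions to Hölder regularity of the gradient map $G$ by comparing expansions at nearby boundary points. The whole argument is perturbative and mimics Krylov's classical boundary $C^{1,\alpha}$ technique, adapted to parabolic cylinders with the half-parabolic reach $t\le t_0$.

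For the pointwise expansion at a fixed $(x_0,t_0)\in Q^0_{1/2}$, I would first translate to the origin and subtract the affine polynomial $p(x)=g(0,0)+\nabla_{x'}g(0,0)\cdot x'$. Because $g\in\hh^{1+\bar\alpha}(Q^0_1)$, the new boundary datum $\tilde g=g-p$ vanishes to order $1+\bar\alpha$ at the origin, and $\tilde v=v-p$ still solves a Pucci system of the form \cref{def1} with a slightly modified constant $L$. The task reduces to producing a single number $a\in\R$ (the normal slope) such that
\begin{equation*}
|\tilde v(x,t)-a\, x_n|\le C(|x|+|t|^{1/2})^{1+\alpha}\quad\text{in }Q^+_{1/2}\cap\{t\le 0\}.
\end{equation*}
The engine is the following improvement-of-flatness lemma: there exist $\rho\in(0,1/2)$, $\alpha\in(0,\bar\alpha)$, and $C>0$ such that whenever $\tilde v$ solves \cref{def1} in $Q^+_1$ with $\|\tilde v\|_{L^\infty(Q^+_1)}\le 1$ and $\|\tilde g\|_{L^\infty(Q^0_1)}+L\le \eta$ for $\eta$ sufficiently small, there is $|a|\le C$ with $\|\tilde v-a x_n\|_{L^\infty(Q^+_\rho)}\le \rho^{1+\alpha}$. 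I would prove this by contradiction and compactness: a failing sequence $\tilde v_k$ converges locally uniformly, via the Krylov–Safonov parabolic boundary Hölder estimate, to a limit $\tilde v_\infty$ that lies in $S(\lambda,\Lambda)$ (cf.\ \cref{notation_S}) with vanishing boundary values on $Q^0_1$. A direct barrier argument (comparing $\tilde v_\infty$ with Krylov-type sub/supersolutions of the form $x_n\pm M x_n|x|^\alpha$ in a parabolic half-ball) then yields a first-order expansion $\tilde v_\infty(x,t)=a_\infty x_n+O(r^{1+\alpha})$ at the origin, contradicting the failure hypothesis for large $k$.

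Iterating the lemma at the dyadic scales $\rho^k$ (after the standard rescaling $\tilde v_k(x,t)=\rho^{-k(1+\alpha)}\tilde v(\rho^k x,\rho^{2k}t)-a_{k-1}x_n$, which preserves \cref{def1} with a smaller right-hand side since $L$ scales subcritically) yields a Cauchy sequence $a_k$ with $|a_{k+1}-a_k|\le C\rho^{k\alpha}$; its limit defines $a_*\in\R$, and the geometric decay $\|\tilde v-a_k x_n\|_{L^\infty(Q^+_{\rho^k})}\le \rho^{k(1+\alpha)}$ translates to the required pointwise estimate, with $G(x_0,t_0)=(\nabla_{x'}g(x_0,t_0),a_*)$. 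The linear bound on $|a_*|$ in terms of $\|v\|_\infty$, $L$, and $\|g\|_{\hh^{1+\bar\alpha}}$ follows from tracking constants through the iteration.

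For the Hölder continuity of $G$, I would take two boundary points $(x_0,t_0),(x_1,t_1)\in Q^0_{1/2}$ with parabolic distance $d$ and evaluate the two pointwise expansions at a common space-time point at scale $r\sim d$ (lying in the intersection of their half-parabolic reaches, which is nonempty after choosing the point with the smaller time coordinate as reference). Subtracting the two expansions and using $|v(x_i,t_i)|$ bounds produces $|G(x_0,t_0)-G(x_1,t_1)|\le C d^{\alpha}$, giving the $\hh^\alpha$ bound on $G$ over $Q^0_{1/2}$. The main obstacle I foresee is the barrier construction inside the contradiction argument: one must produce parabolic sub- and supersolutions of the Pucci equation in a half-cylinder that pin the limit $\tilde v_\infty$ between comparable multiples of $x_n$ with $\hh^\alpha$-control on the ratio. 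This is exactly the parabolic analogue of Krylov's boundary estimate and is the nontrivial content behind the reference \cite{W1}; once granted, everything else is a careful but standard iteration.
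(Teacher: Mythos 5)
The first thing to say is that the paper does not prove this statement at all: Theorem~\ref{thm_deff} is imported verbatim as a known result, cited to \cite[Theorem 2.1]{W1} and to \cite{MMW}, so there is no in-paper proof to compare yours against. Your outline --- reduce to vanishing boundary data, prove a one-step improvement-of-flatness lemma by compactness, iterate along dyadic parabolic scales to get the pointwise expansion with a normal slope $a_*$, then compare expansions at two boundary points at scale $d$ to get $\hh^\alpha$ continuity of $G$ --- is exactly the standard route and is the one followed in \cite{MMW} and in the elliptic analogue \cite[Theorem 1.1]{SS}. The scaling bookkeeping you record (subcritical scaling of $L$ and of the drift $K$, the boundary datum shrinking like $\rho^{k(\bar\alpha-\alpha)}$ because $\alpha<\bar\alpha$, and the choice of the earlier-in-time point as the common evaluation point for the two half-reaches) is all correct.

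The one place where your argument is not actually an argument is the core of the compactness step: the claim that the limit $\tilde v_\infty\in S(\lambda,\Lambda)$ vanishing on $Q^0_1$ admits a first-order expansion $a_\infty x_n+O(r^{1+\alpha})$ ``by a direct barrier argument comparing with $x_n\pm Mx_n|x|^\alpha$''. The Hessian of $x_n|x|^\alpha$ has eigenvalues of both signs of size $|x|^{\alpha-1}$, and neither $\mathcal{M}^+$ nor $\mathcal{M}^-$ applied to it is sign-definite in a way that makes this pair of functions an admissible sub/supersolution barrier as written; no single explicit barrier produces the H\"older exponent. The actual content of Krylov's boundary estimate is an oscillation-decay iteration for the quotient $v/x_n$: one traps $m_r x_n\le v\le M_r x_n$ on $Q^+_r$, applies the interior Harnack inequality on cylinders at height $x_n\sim r$, and propagates the improved bounds down to $\{x_n=0\}$ with polynomial barriers of the form $Ax_n-Bx_n|x|^2/r^2+\cdots$; the exponent $\alpha$ then emerges from the geometric decay $M_r-m_r\le(1-\theta)(M_{2r}-m_{2r})$. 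You correctly identify this as ``the nontrivial content behind \cite{W1}'', but you leave it as a black box, and the same boundary H\"older estimate (uniform in $k$) is also needed to run Arzel\`a--Ascoli up to $Q^0_1$ in your contradiction argument. Since this is precisely the ingredient the theorem exists to package, your proposal is a faithful reduction of the statement to its known hard core rather than a self-contained proof --- which, to be fair, is no less than what the paper itself does by citing \cite{W1,MMW}.
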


Now suppose  $u$ solves  the following Dirichlet problem 
\begin{equation}\label{dt1}
\left\{
\begin{aligned}{}
-u_{t}+F(D^{2}u,Du,x,t)&=f(x,t)~~\quad \text{in}~~Q^{+}_{1},\\
u&=g~~\quad\text{on}~~~Q^{0}_{1},
\end{aligned}
\right.
\end{equation}
where $F$ satisfies the structural assumptions  as in \descref{H1}{H1} and \descref{H2}{H2} and  $f$ is continuous.  It turns out that by a standard argument, we can transfer the regularity using \cref{thm_deff} coupled with interior $\hh^{1+\alpha}$ estimates as in \cite{W} to the viscosity solutions of \cref{dt1} which allow us to conclude that solutions to \cref{dt1} are in fact $\hh^{1+\alpha}$ upto the boundary. 
\begin{thm}\label{ub}
Let $u\in C(Q^{+}_{1}\cup Q^{0}_{1})$ be a viscosity solution of \cref{dt1} where $g\in \hh^{1+\bar{\alpha}}(Q^{0}_{1})$.  Then $u\in \hh^{1+\alpha}(\overline{Q^{+}_{1}})$ with a norm bounded by the quantity $\|u\|_{L^{\infty}(Q^{+}_{1})},K, \bar{\alpha},\lambda, \Lambda, ||f||_{L^{\infty}}$ and $\|g\|_{\hh^{1+\bar{\alpha}}(Q^{0}_{1})}$.  Moreover $\alpha$ depends on $\bar{\alpha}, \lambda, \Lambda$ and $n$. 
\end{thm}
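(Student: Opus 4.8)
The plan is to derive the estimate from two results already available: the boundary $\hh^{1+\alpha}$ estimate for Pucci-type inequalities, \cref{thm_deff}, and the interior $\hh^{1+\alpha}$ estimate of \cite{W}; these are then glued together by a parabolic scaling argument. First, one checks that a viscosity solution $u$ of \cref{dt1} belongs to the class \cref{def1} in $Q^+_1$. Indeed, \descref{H2}{H2} with $M=0$, $p=0$ forces $(x,t)\mapsto F(0,0,x,t)$ to be a constant $c_0$ with $|c_0|\le\|F(0,0,\cdot,\cdot)\|_{L^\infty}$; feeding $N=0$, $q=0$ into \descref{H1}{H1} and using $-u_t+F(D^2u,Du,x,t)=f$ gives $-u_t+\mathcal{M}^+_{\lambda,\Lambda}(D^2u)+K|Du|\ge f-c_0\ge -L$ and $-u_t+\mathcal{M}^-_{\lambda,\Lambda}(D^2u)-K|Du|\le f-c_0\le L$ in the viscosity sense, with $L:=\|f\|_{L^\infty(Q^+_1)}+|c_0|$. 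Running the same computation for $u-\langle b,x\rangle$, $b\in\R^n$ constant, shows this translated function again lies in the class \cref{def1}, now with $L$ replaced by $L+K|b|$; this mild translation invariance is what lets the boundary estimate interact with the affine corrections below.

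Next, since $g\in\hh^{1+\bar\alpha}(Q^0_1)$, \cref{thm_deff} applied to $u$ yields $\alpha\in(0,\bar\alpha)$, a boundary gradient $G\in\hh^\alpha(Q^0_{1/2},\R^n)$ with $\|G\|_{\hh^\alpha(Q^0_{1/2})}\le C\big(\|u\|_{L^\infty(Q^+_1)}+L+\|g\|_{\hh^{1+\bar\alpha}(Q^0_1)}\big)$, and, for each $z_0=(x_0,t_0)\in Q^0_{1/2}$, the expansion $|u(x,t)-u(x_0,t_0)-\langle G(z_0),x-x_0\rangle|\le C(|x-x_0|+|t-t_0|^{1/2})^{1+\alpha}$ for $(x,t)\in Q^+_{1/2}$ with $t\le t_0$. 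Independently, by the first step and the interior $\hh^{1+\alpha}$ theory of \cite{W} (shrinking $\alpha$ if necessary so that the two exponents coincide), any solution of \cref{def1} is $\hh^{1+\alpha}$ on cylinders compactly contained in $Q^+_1$, with scale-invariant estimates after subtracting affine functions of $x$.

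The main step is the gluing. Write $z=(x,\tau)$ for a generic point. For $z\in\overline{Q^+_{1/4}}$ with $d:=\mathrm{dist}(z,Q^0_1)>0$, its parabolic projection onto $Q^0_1$ is $z^\ast=((x',0),\tau)\in Q^0_{1/4}$, with the same time coordinate, so all points of $Q_{d/2}(z)$ have time $\le\tau$ and parabolic distance $\lesssim d$ from $z^\ast$; hence by the expansion of the previous paragraph the function $w:=u-u(z^\ast)-\langle G(z^\ast),\cdot-x'\rangle$ satisfies $|w|\le Cd^{1+\alpha}$ on $Q_{d/2}(z)$ and, by Step 1, solves \cref{def1} there with parameter $L+K|G(z^\ast)|\le C$. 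Dilating $Q_{d/2}(z)$ to the unit cylinder via $(y,s)\mapsto(x+\tfrac{d}{2}y,\tau+\tfrac{d^2}{4}s)$ and applying the interior estimate to the (uniformly bounded) rescaled function gives, after undoing the scaling, $|Du(z)-G(z^\ast)|\le Cd^\alpha$ and $[Du]_{\hh^\alpha(Q_{d/4}(z))}\le C$. Now fix $z_1,z_2\in\overline{Q^+_{1/4}}$ with $d_1:=\mathrm{dist}(z_1,Q^0_1)\le d_2:=\mathrm{dist}(z_2,Q^0_1)$ and put $r=\mathrm{dist}(z_1,z_2)$. If $r<d_1/10$ then $z_2\in Q_{d_1/4}(z_1)$ and the seminorm bound gives $|Du(z_1)-Du(z_2)|\le Cr^\alpha$. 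If $r\ge d_1/10$ then $d_1,d_2\le Cr$ and $\mathrm{dist}(z_1^\ast,z_2^\ast)\le Cr$, so $|Du(z_1)-Du(z_2)|\le|Du(z_1)-G(z_1^\ast)|+\|G\|_{\hh^\alpha}\,\mathrm{dist}(z_1^\ast,z_2^\ast)^\alpha+|G(z_2^\ast)-Du(z_2)|\le Cr^\alpha$ (when $z_i\in Q^0_1$, interpret $Du(z_i):=G(z_i)$, which is also the continuous boundary trace obtained by letting $d\to0$ above). Together with the $L^\infty$ bounds on $u$ and $Du$, this shows $u\in\hh^{1+\alpha}(\overline{Q^+_{1/2}})$ with the asserted dependence of the constant; a standard covering and rescaling argument then gives the statement as written.

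The only genuine work lies in Step 3: carrying the powers of $d$ correctly through the parabolic dilation, and — the one point that actually uses the hypotheses — checking that subtracting the affine part $\langle G(z^\ast),\cdot\rangle$ leaves the equation inside the class \cref{def1} with data bounded independently of $z$, which is where \descref{H1}{H1} and the bound on $\|G\|_{\hh^\alpha}$ from \cref{thm_deff} enter, together with the harmless one-sided-in-time restriction $t\le t_0$ in \cref{thm_deff}. None of this is deep; the substance of the theorem is entirely contained in \cref{thm_deff} and the interior theory, which is why the passage is "standard."
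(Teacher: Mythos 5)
Your proposal takes exactly the route the paper intends: the paper gives no details for \cref{ub} beyond the sentence that the regularity is ``transferred'' from \cref{thm_deff} coupled with the interior $\hh^{1+\alpha}$ estimates of \cite{W}, and your three steps (place $u$ and its affine translates in the class \cref{def1}, extract the boundary gradient $G$ from \cref{thm_deff}, glue with interior estimates on $Q_{d/2}(z)$ via the dichotomy $r<d_1/10$ versus $r\ge d_1/10$) are the standard implementation of that sentence. The bookkeeping of the powers of $d$ through the parabolic dilation is consistent, provided the rescaled function is normalized by $d^{1+\alpha}$ so that it is bounded by a universal constant, which is what your parenthetical ``(uniformly bounded)'' must mean.

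There is, however, one step that fails as literally written. You assert that ``any solution of \cref{def1} is $\hh^{1+\alpha}$ on cylinders compactly contained in $Q^+_1$,'' and you later justify applying the interior estimate to $w=u-u(z^\ast)-\langle G(z^\ast),\cdot-x'\rangle$ solely by checking that $w$ remains in the class \cref{def1}. Membership in the Pucci class does not yield interior $C^{1,\alpha}$ regularity: for that class Krylov--Safonov gives only $\hh^{\alpha}$, and the $\hh^{1+\alpha}$ theory requires that the \emph{incremental quotients} of the solution again lie in $S(\lambda,\Lambda)$, which is a consequence of the solution satisfying an equation with the structure \descref{H1}{H1}, not of the two differential inequalities \cref{def1} alone (indeed the class $S(\lambda,\Lambda)$ contains Lipschitz, non-differentiable functions when $\Lambda/\lambda$ is large). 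The repair is immediate and should be made explicit: $w$ solves $-w_t+F_\ell(D^2w,Dw,x,t)=f$ with $F_\ell(M,p,x,t):=F(M,p+G(z^\ast),x,t)$, which satisfies \descref{H1}{H1}--\descref{H2}{H2} with constants controlled by $\|G\|_{L^\infty(Q^0_{1/2})}\le C$, and it is to this rescaled \emph{equation} that the interior estimate of \cite{W} is applied. With that substitution the argument closes. (A final cosmetic point, attributable to the paper rather than to you: no covering argument gives regularity up to all of $\overline{Q^+_1}$; the honest conclusion is $\hh^{1+\alpha}(\overline{Q^+_r})$, $r<1$, up to the flat portion $Q^0_r$ only, which is all that is used in \cref{sl7}.)
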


%
%
%
%

\section{Proof of the main results}\label{section3}
The first step we show is the  existence of second order Taylor approximation  at a  boundary point for solutions to homogeneous equations which vanishes on a portion of a flat boundary. This is analogous to  \cite[Lemma 4.1]{SS}.

\begin{lem}\label{sl7}
Let $u$ satisfying $|u| \leq 1$ be a viscosity solution to
\begin{equation*}
\begin{cases}
F(D^2u, Du)= u_t\quad \text{in $B_1^{+} \times  (-1, 0]$},
\\
u=0\quad \text{on $B_1^0 \times (-1, 0]$},
\end{cases}
\end{equation*}
where $F$ satisfies  \descref{H1}{H1}. Then for some $\alpha = \alpha(n,\lambda,\Lambda) \in (0,1)$, there exists an $\alpha$-H\"older continuous function, $H: B_{1/2}^{0} \times (-1/4, 0] \to \R^{n \times n}$ with universal bounds (i.e., depending only on the structural conditions)  such that  for all $(x_0, t_0) \in B_{1/2}^{0} \times (-1/4, 0]$, we have
\begin{equation*}
F(H(x_0, t_0), Du(x_0, t_0))=0.
\end{equation*}
Moreover, there exists a universal constant $C= C(n,\lambda, \Lambda,K, F(0,0))$ such that for all $(x, t)$ such that $t \leq t_0$, the following boundary $\hh^{2 + \alpha}$ estimate holds:
\begin{equation}\label{sat1}
|u(x,t) - \langle Du(x_0, t_0), x-x_0\rangle  - \frac{1}{2} \langle  H(x_0, t_0) (x-x_0), x-x_0\rangle |  \leq C( |x-x_0| + |t-t_0|^{1/2})^{2+\alpha}.
\end{equation}
\end{lem}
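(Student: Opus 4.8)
\emph{Outline of the argument (following the scheme of \cite[Lemma 4.1]{SS}, adapted to the parabolic setting).} Since $F$ does not depend on $(x,t)$ and the boundary piece is flat, a translation reduces everything to proving the statement at the single boundary point $(x_0,t_0)=(0,0)$, with all constants universal (i.e.\ depending only on $n,\lambda,\Lambda,K,F(0,0)$). By \cref{ub} one has $u\in\hh^{1+\alpha}(\overline{Q^{+}_{3/4}})$, so $Du$ is bounded and $\alpha$-H\"older up to $Q^{0}_{3/4}$; since $u\equiv0$ on $Q^{0}_{1}$, also $\partial_i u\equiv0$ there for $i<n$, hence $Du(0,0)=\partial_n u(0,0)\,e_n$. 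The first substantive step is to apply \cref{thm_deff} to the tangential derivatives $\partial_i u$, $i<n$: by translation invariance of $F$, the difference quotients $\tau^{-1}\bigl(u(\cdot+\tau e_i,\cdot)-u\bigr)$ solve the extremal inequalities \cref{def1} with $L=0$ (the constant term cancels in the difference) and vanish on $Q^{0}$, and so does $\partial_i u$ in the limit. This produces functions $b_i\in\hh^{\alpha}(Q^{0}_{1/2})$ with universal bounds — the normal components of the boundary ``gradients'' of $\partial_i u$, the tangential components being zero because $\partial_i u$ vanishes on the flat boundary — such that
\[
|\partial_i u(x,t)-b_i(y_0,s_0)\,x_n|\le C\bigl(|x-y_0|+|t-s_0|^{1/2}\bigr)^{1+\alpha}
\]
for boundary points $(y_0,s_0)\in Q^{0}_{1/2}$ and $(x,t)\in Q^{+}_{1/2}$ with $t\le s_0$.

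Next I would fix the candidate Hessian at the origin: let $H=H(0,0)$ have vanishing tangential–tangential block, off-diagonal entries $H_{in}=H_{ni}=b_i(0,0)$ for $i<n$, and normal–normal entry $H_{nn}$ defined as the unique $\tau$ for which $F(\widehat H+\tau\,e_n\otimes e_n,\,Du(0,0))=0$, where $\widehat H$ is the already-specified part; existence, uniqueness, and a universal bound for $\tau$ come from the strict monotonicity of $F$ in the direction $e_n\otimes e_n$ encoded in \descref{H1}{H1}. That same monotonicity, together with the $K$-Lipschitz dependence of $F$ on the gradient, makes the map $(\widehat H,Du(0,0))\mapsto H_{nn}$ Lipschitz; combined with the $\hh^{\alpha}$-bounds on the $b_i$ and on the boundary values of $Du$, this gives $H(\cdot,\cdot)\in\hh^{\alpha}(Q^{0}_{1/2})$ with universal bounds and $F(H(y_0,s_0),Du(y_0,s_0))=0$ for all boundary points by construction. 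Integrating the estimate of the previous paragraph along tangential segments (and writing $\eta(s,t):=u(0',s,t)$, so $\eta(0,\cdot)\equiv0$ and $\eta_s(0,0)=\partial_n u(0,0)$) yields
\[
\bigl|u(x,t)-\eta(x_n,t)-{\textstyle\sum_{i<n}}\,b_i(0,0)\,x_i x_n\bigr|\le C\bigl(|x|+|t|^{1/2}\bigr)^{2+\alpha},
\]
so it remains only to prove the one-variable expansion $\bigl|\eta(s,t)-\partial_n u(0,0)\,s-\frac{1}{2}H_{nn}\,s^{2}\bigr|\le C\bigl(s+|t|^{1/2}\bigr)^{2+\alpha}$.

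This last point is where the \emph{equation}, and not merely the extremal inequalities, must be used: up to a flat boundary the latter force only $\hh^{1+\alpha}$, and in fact no distance-type supersolution barrier vanishing on $\{x_n=0\}$ decays faster than linearly, so the quadratic gain cannot come from a bare comparison argument. Following \cite{SS}, the idea is to show that $\eta$ is, near $(0,0)$, a viscosity sub- and super-solution of the one–space–dimensional uniformly parabolic equation $-\eta_t+\widetilde F(\eta_{ss},\eta_s,s)=0$ up to a right-hand side of size $O\bigl((s+|t|^{1/2})^{\alpha}\bigr)$, where $\widetilde F(m,p,s):=F\bigl(\widehat H_s(m),\,p\,e_n+\sum_{i<n}b_i(0,0)\,s\,e_i\bigr)$ and $\widehat H_s(m)$ is the matrix whose only free entry, the $(n,n)$ one, equals $m$. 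A one-dimensional test function for $\eta$ is lifted to an admissible test function for $u$ by adding the frozen tangential part $\sum_{i<n}b_i(0,0)x_i x_n$ together with a correction of the form $\pm C\bigl(|x|^{2}+|t|\bigr)^{1+\alpha/2}$, which is of class $C^{2,1}$ with vanishing second derivatives at the base point since $\alpha\in(0,1)$; the $O\bigl((s+|t|^{1/2})^{\alpha}\bigr)$ error is precisely what absorbs the mismatch from the tangential reduction. Since $\widetilde F$ is uniformly elliptic in the single remaining variable — so the Nadirashvili–Vladut obstruction is absent — and Lipschitz in $(s,t)$, classical boundary $\hh^{2+\alpha}$ regularity for one-dimensional fully nonlinear parabolic equations (with zero lateral data) gives the desired expansion for $\eta$, and reading the equation at $(0,0)$ recovers $F(H(0,0),Du(0,0))=0$ consistently with the construction of $H_{nn}$.

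Assembling the expansion for $\eta$ with the tangential reduction, recalling $Du(0,0)=\partial_n u(0,0)e_n$ and the structure of $H$, and handling points $(x,t)\in Q^{+}_{1/2}$ far from $(0,0)$ by the crude bound $|u|\le1$ together with the universal bounds on $Du(0,0)$ and $H(0,0)$ (enlarging $C$), one obtains \cref{sat1}; undoing the translation and letting the base point range over $Q^{0}_{1/2}\times(-1/4,0]$ completes the proof, with $\alpha=\alpha(n,\lambda,\Lambda)$ and $C=C(n,\lambda,\Lambda,K,F(0,0))$ as claimed. I expect the main obstacle to be exactly the rigorous implementation of this reduction to one dimension: a sup-norm approximation such as the second displayed inequality does not by itself control second derivatives, so the construction of the lifted test functions, and the bookkeeping of the homogeneity of the correction terms (so that the induced right-hand side is of order $(s+|t|^{1/2})^{\alpha}$ and no worse), have to be carried out with care; the tangential estimates of the first step and the strict ellipticity of $F$ in the normal direction are what make this possible.
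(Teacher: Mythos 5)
Your first half — obtaining the boundary ``gradients'' of the tangential derivatives $\partial_i u$ via \cref{thm_deff} applied to difference quotients, showing $\partial_i u_n=H_{ni}$ on the flat boundary, and closing the matrix by solving $F(\widehat H+\tau e_n\otimes e_n,Du)=0$ for $H_{nn}$ using uniform ellipticity — is exactly what the paper does. The gap is in the second half, the quadratic expansion in the normal direction, which is the heart of the lemma. You propose to show that the restriction $\eta(s,t)=u(0',s,t)$ is a viscosity sub/supersolution of a one--dimensional uniformly parabolic equation up to an error $O\bigl((s+|t|^{1/2})^{\alpha}\bigr)$, by lifting 1D test functions. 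This step is asserted, not proved, and the proposed lifting does not go through as described: the sup-norm approximation $|u-\eta(x_n,t)-\sum_{i<n}b_i x_i x_n|\le C(|x|+|t|^{1/2})^{2+\alpha}$ is centered at the origin, so for a 1D test function $\varphi$ touching $\eta$ at $(s_0,t_0)$, any admissible correction must simultaneously vanish to second order at $(0',s_0,t_0)$ and dominate an error of size $\rho_0^{2+\alpha}$ (with $\rho_0=s_0+|t_0|^{1/2}$) throughout a tangential neighborhood of width comparable to $\rho_0$; a term like $K\rho_0^{\alpha}|x'|^2$ achieves this only on the tangential faces $|x'|\sim\rho_0$, while on the faces where $|x'|$ is small and $(x_n,t)$ moves away from $(s_0,t_0)$ one would need a quantitative strict gap $\varphi-\eta\gtrsim\rho_0^{2+\alpha}$, which touching from above does not provide. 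In other words, the sup-norm closeness gives no one-sided control of the tangential second derivatives at the touching point, and establishing that $\eta$ satisfies any equation is essentially equivalent to the regularity one is trying to prove. You flag this yourself as ``the main obstacle''; it is not a technicality to be carried out with care but the actual content of the lemma. (Even granting the reduction, you would still need boundary $\hh^{2+\alpha}$ Schauder theory at a corner point for 1D fully nonlinear parabolic viscosity solutions with a right-hand side decaying like $\rho^{\alpha}$, another ingredient invoked without proof.)

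The paper (following Silvestre--Sirakov, whose Lemma 4.1 does \emph{not} use a dimension reduction) circumvents this entirely with a barrier and the comparison principle: assuming $u(0,r,0)-u_n(0,0,0)\,r-\tfrac12H_{nn}r^2=\pm C_0r^{2+\alpha}$ with $C_0$ large, one builds the explicit function $w$ in \cref{s4}, consisting of the candidate second-order polynomial plus corrections $C_0r^{\alpha}x_n^2-MC_1r^{\alpha}|x|^2+C_2r^{\alpha}t$, verifies using \descref{H1}{H1} that $w$ is a subsolution of the full $n$-dimensional equation in the box $\{|x'|<r\}\times(0,r)\times(-r^2,0]$ and that $w\le u+k$ on its parabolic boundary (this is where the tangential estimates \cref{s20}--\cref{s2} enter), and then derives a contradiction from the strong maximum principle (if $k>0$) or the Hopf lemma (if $k=0$), yielding the universal bound \cref{s31} on $C_0$ and hence \cref{sat1}. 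To repair your argument you should replace the 1D reduction by this comparison argument, or else supply a complete proof that $\eta$ is a viscosity solution of the reduced equation, which I do not see how to do from the ingredients you have assembled.
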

\begin{proof}
We have from \cref{ub} that $u \in \hh^{1+\alpha}(\overline{Q_r^+})$ for all $r <1$.  Moreover by taking incremental quotients of the type
\[
 u_{h, e_i} (x, t) := \frac{ u(x+he_i, t)- u(x, t)}{h} \qquad \text{for} \ i = 1,2,\ldots, n,
\]
and by passing to the limit as $h \rightarrow 0$, we note that $v= u_i$ solves
\begin{equation*}
\begin{cases}
v \in \mathcal{S}(\lambda, \Lambda),
\\
v=0\ \  \text{on}\  B_1^0 \times (-1, 0].
\end{cases}
\end{equation*}
Recall the notation $S(\lambda, \Lambda)$ defined in \cref{notation_S}. In a similar way,  by taking repeated difference quotients of the type
\[
u_{h, t}(x,t)=\frac{u(x, t+h)- u(x, t)}{h^{k\alpha/2}} \qquad \text{for} \ \ k=1,2...,
\]
we observe  that $u_t \in \mathcal{S}(\lambda, \Lambda)$ and consequently,  $u_t$ is in  $ \hh^{\alpha}(\overline{B_r^+} \times (-r^2, 0])$ for all $r < 1$ and $u_t \equiv 0$ on $\{x_n=0\}$.  Now by applying \cref{thm_deff} to $u_i$, we obtain
\begin{equation}\label{s20}
|u_i(x, t) - \langle A_i(x_0, t_0), x- x_0\rangle  | \leq C( |x-x_0| + |t-t_0|^{1/2} )^{1+\alpha},
\end{equation}
for $(x_0, t_0) \in B_{3/4}^0 \times (-3/4, 0]$ and where $A_i$ is $\alpha$- H\"older continuous on $B_{3/4}^0 \times (-3/4, 0]$.  We now define
\[
H_{ij}=0\ \  \text{for} \  i, j=1, ..., n-1\qquad  \text{and}\qquad  H_{ni}= A_i\ \ \text{for} \  i=1, .., n-1.
\]
Moreover, by applying \cref{thm_deff} to $u_t$, we obtain
\begin{equation}\label{s2}
|u_t(x, t) - \langle \tilde{H}(x_{0},t_{0}), x- x_0\rangle  | \leq C( |x-x_0| + |t-t_0|^{1/2} )^{1+\alpha},
\end{equation}
for all $(x_{0},t_{0})\in B^{0}_{r}(x_{0})\times(-r^{2},0]$ and where $\tilde H$ is a H\"older continuous function on $\{x_n=0\}$.  

We now show that \cref{s2} implies  $u_n$ is $\hh^{1, \alpha}$ on $\{x_n=0\}$ and also that its tangential derivative coincides with $H_{ni}$.  Clearly, it suffices to establish the claim at the point $(0,0,0)$, which we prove as in  \cite{SS}.  On one hand, for $i=1, .., n-1$, we have 
\begin{equation*}
\begin{array}{rcl}
    u( he_i + ke_n, t) - u( k e_n, 0)  & \overset{\redlabel{35a}{a}}{=} &  h u_{i} ( ke_n + \theta h e_i, \theta t) + t u_t( k e_n + \theta h e_i, \theta t) \\
    & \overset{\redlabel{35b}{b}}{\leq} &  h k H_{ni}(\theta h e_i, \theta t) +  t k \tilde H(\theta h e_i, \theta t) + C|h| |k|^{1+\alpha} + C |t| |k|^{1+\alpha}\\
    & \overset{\redlabel{35c}{c}}{\leq} &  h k H_{ni} (0, 0, 0) +  t k \tilde H(0,0,0)  \\
    && +C(|k| |h||t|^{\alpha/2}  + |h||k|^{1+\alpha} + |k| |h|^{1+\alpha} +|t| |k| |h|^{\alpha} + |k| |t|^{\frac{1+\alpha}{2}} +  |t||k|^{1+\alpha}), 
\end{array}
\end{equation*}
where to obtain \redref{35a}{a}, we applied Mean value theorem for some $\theta < 1$, to obtain \redref{35b}{b}, we made use of \cref{s20} and \cref{s2} and finally to obtain \redref{35c}{c}, we made use of H\"older continuity of $H_{ni}$ and $\tilde H$.

On the other hand, we can obtain a lower bound for the difference $u( he_i + ke_n, t) - u( k e_n, 0)$ using mean value theorem in the $e_n$ direction along with  using the fact that $u \equiv 0$ on $\{x_n=0\}$, which gives
\begin{equation*}
\begin{array}{rcl}
u( he_i + ke_n, t) - u( k e_n, 0)& \overset{\redlabel{34a}{a}}{=} &  ku_n( he_i + \theta_1 k e_n, t)- ku_n (\theta_2 k e_n, 0)\\
 & \overset{\redlabel{34b}{b}}{\geq} &   k u_n (he_i,t) - ku_n(0,  0)   - Ck^{1+\alpha},
\end{array}
\end{equation*}
where to obtain \redref{34a}{a}, we used Mean Value theorem with $\theta_1,\theta_2 < 1$ and to obtain \redref{34b}{b}, we used the fact that $u_n \in \hh^{\alpha}$ upto $\{x_n=0\}$.

By combining the upper and lower bounds from   above followed with  dividing  by  $k$ and  letting  $k \to 0$, we obtain
\begin{equation*}
\begin{array}{rcl} 
u_n (he_i,t) - u_n(0, 0)- hH_{ni} & \leq &  C(|h|^{1+\alpha}  + |h| |t|^{\alpha/2} + |t|^{\frac{1+\alpha}{2}} + |t| |h|^{\alpha})
\\
 & \leq &   C(h^{1+\alpha} + |t|^{\frac{1+\alpha}{2}}),
\end{array}
\end{equation*}
where to obtain the last ineuqality, we applied Young's ineuqality  to $|t| |h|^{\alpha}$ and $|h| |t|^{\alpha/2}$. Analogously, the following lower bound  also holds:
\[u_n (he_i,t) - u_n(0, 0)- hH_{ni}\geq-( C|h|^{1+\alpha} + C|t|^{\frac{1+\alpha}{2}}).\] 
  Thus, combining both the bounds, we get
\begin{equation*}
|u_n (he_i,t) - u_n(0, 0)- hH_{ni}|\leq C(|h|^{1+\alpha} + |t|^{\frac{1+\alpha}{2}}),
\end{equation*}
which implies that $u_n$ is in $\hh^{1+\alpha}(B^{0}_{3/4}\times(-9/16,0])$ and $\partial_iu_n=H_{ni}$ on $B^{0}_{3/4}\times(-9/16,0]$ for $i=1,2,\ldots,n-1$.  Consequently we can let $H_{in}= H_{ni}$  on $B^{0}_{3/4}\times(-9/16,0]$.  At this point, we can finish the construction of $H$ by defining $H_{nn}(x_0,t_0)$ as the  unique real number for which $F(H(x_0,t_0), Du(x_0,t_0))=0$. The uniform ellipticity of $F$ ensures  that $H_{nn}$ is also  in $\hh^{\alpha}$. 


It now remains to prove \cref{sat1} which we do as follow: Without loss of generality we will show that the inequality is valid at $(x_0,t_0)=(0,0)$. First, let us  show that for some $r> 0$, if there exists $C_0$ such that 
\begin{equation}\label{s3}
u(0,r,0)-u_{n}(0,0,0)r-\frac{1}{2}H_{nn}(0,0,0)r^{2}=\pm C_{0}r^{2+\alpha},
\end{equation}
holds, then there exists a universal constant $C$ such that 
\begin{equation}\label{s31}
C_0 \leq C.
\end{equation}
 
 The proof is by contradiction, suppose on the contrary that is  not the case and  we have plus sign in \cref{s3} (minus sign will be treated analogously),  then consider the following auxiliary function
\begin{equation}\label{s4}
\begin{array}{rcl}
w(x',x_{n},t)&=&\langle Du(0,0), x \rangle+\frac{1}{2}\langle H(0,0)x,x\rangle+C_{0}r^{\alpha}x_{n}^2-MC_{1}r^{\alpha}|x|^{2}+C_{2}r^{\alpha}t\\
&=&u_{n}(0,0,0)x_{n}+H_{ni}x_{i}x_{n}+\frac{1}{2}H_{nn}x^{2}_{n}+C_{0}r^{\alpha}x^{2}_{n}-MC_{1}r^{\alpha}|x|^{2}+C_{2}r^{\alpha}t,
\end{array}
\end{equation}
where $C_{0}$ is  as in \cref{s3} and $C_{1}, C_{2}$ and $M$ are  constants to be chosen below. Let us also define
\begin{equation}\label{def_k}
k:=\max\{w(0,s,0)-u(0,s,0)~~|~~s\in[0,r]\}.
\end{equation}
Now our aim is to show that there exists some $r_{0}$ such that for all $0< r< r_{0}$ we have
\begin{enumerate}
\item\label{conc1} $w$ is a subsolution of 
\begin{equation*}
-w_{t}+F(D^{2}w,Dw)\geq0~~~\quad \text{in}\ \{|x'| <  r\} \times(0,r)\times(-r^{2},0],
\end{equation*}
\item\label{conc2} and it satisfies
\begin{equation}\label{s5}
w\leq u+k \quad \text{on} \ A_{1}\cup A_{2}\cup A_{3}\cup A_{4},
\end{equation}
where $A_{1}:=\{|x'|\leq r\}\times\{x_{n}=0\}\times(-r^{2},0)$,  $A_{2}:=\{|x'|=r\}\times\{0< x_{n}< r\}\times(-r^{2},0)$, $A_{3}:=\{|x'|\leq r\}\times\{x_{n}=r\}\times(-r^{2},0)$ and $A_{4}:=\{|x'|\leq r\}\times\{0 < x_{n}< r\}\times\{t=-r^{2}\}$.
\end{enumerate}
We start by showing \cref{conc2} first as follows:  Since $u(0,0,0)=w(0,0,0)=0$, from \cref{def_k}, we see that   $k\geq0$.  We then make the following observations. 
\begin{description}
\descitemnb{Observation 1}{one} From the definition of $w$ it follows that
\begin{equation*}
w(x',s,-r^{2})-w(0,s,0)=H_{ni}(0,0,0)x_{i}s-MC_{1}r^{\alpha}|x'|^{2}-C_{2}r^{2+\alpha}.
\end{equation*}
\descitemnb{Observation 2}{two} From \cref{s3} and \cref{s4}, we have
\begin{equation*}
w(0,r,0)-u(0,r,0)=-MC_{1}r^{2+\alpha}.
\end{equation*}
\descitemnb{Observation 3}{three} Furthermore by  mean value theorem, for some $\theta \in (0,1)$,  we have the following estimate
\begin{equation*}
\begin{array}{rcl}
u(x',s,t)-u(0,s,0)&\overset{}{=}& u_{i}(\theta x',s,\theta t)x_{i}+tu_{t}(\theta x',s,\theta t)\\ 
&\overset{\cref{s20}}{\geq}& H_{ni}(\theta x',0,\theta t)x_{i}s-C_{1}|x'|s^{1+\alpha}+tu_{t}(\xi,s,\theta t)\\
&\geq& H_{ni}(0,0,0)x_{i}s-C_{1}|x'|s(|x'|+|t|^{1/2})^{\alpha}-C_{1}|x'|s^{1+\alpha}+tu_{t}(\theta x',s,\theta t).\\
\end{array}
\end{equation*}
Now by applying the  \cref{s2}  to  the term $t u_t$  above and by using the H\"older continuity of $\tilde H$, we  can infer that the following estimate holds, 
\begin{equation}\label{ty}
\begin{array}{rcl}
u(x',s,t)-u(0,s,0) & \geq &  H_{ni}(0,0,0)x_{i}s-C_{1}s|x'|^{1+\alpha} -C_{1}|x'|s^{1+\alpha} \\
&& +C_{1}ts^{1+\alpha}- C_1 s |t|^{1+\alpha/2} - C_1 s |t| |x'|^{\alpha} + \tilde{H}(0,0,0)st.
\end{array}
\end{equation}
\end{description}

 We now verify \cref{conc2}. 
\begin{description}
    \item[On $A_1$, i.e., when  $\{x_n=0\}$] On this set  we have 
\[w(x',0,t)=-MC_{1}r^{\alpha}|x'|^{2}+C_{2}r^{\alpha}t\leq0=u(x',0,t)\leq u(x',0,t)+k.\]
    \item[On $A_2$, i.e., when  $\{|x'|=r\}$]  In this case,  using \cref{ty}, we have
\begin{equation*}
\begin{array}{rcl}
w(x',s,t)-u(x',s,t)-k & = & [w(x',s,t)-w(0,s,0)]-[u(x',s,t)-u(0,s,0)]-[k+u(0,s,0)-w(0,s,0)]\\
&\overset{\cref{def_k}}{\leq} & [w(x',s,t)-w(0,s,0)]-[u(x',s,t)-u(0,s,0)]\\
&\overset{\redlabel{a2a}{a}}{\leq} &  H_{ni}(0,0,0)x_{i}s-MC_{1}r^{\alpha+2}+C_{2}r^{\alpha}t\\
&& -\Big[H_{ni}(0,0,0)x_{i}s-C_{1}s|x'|^{1+\alpha} -C_{1}|x'|s^{1+\alpha}]\\
&& - [C_{1}ts^{1+\alpha}- C_1 s |t|^{1+\alpha/2} - C_1 s |t| |x'|^{\alpha} + \tilde{H}(0,0,0)st].
\end{array}
\end{equation*}
Now since, $|t|< r^2 \Rightarrow -r^2 < t < 0$ and $0 < s<r$, by choosing $M$ sufficiently large,  we can ensure that the quantity  appearing on the right hand side of \redref{a2a}{a}  is nonpositive.  Therefore,  on $A_2$, we see that  $w\leq u+k$.

    \item[On $A_3$, i.e., when  $\{x_n=r\}$] On this set, we have
\begin{equation*}
\begin{array}{rcl}
w(x',r,t)-u(x',r,t)-k& \overset{\redlabel{a3a}{a}}{\leq}& [w(x',r,t)-w(0,r,0)] -[u(x',r,t)-u(0,r,0)] + [ w(0, r, 0) - u(0, r, 0)]\\ 
&\overset{\redlabel{a3b}{b}}{\leq}& \left[H_{ni}x_{i}r-MC_{1}r^{\alpha}|x'|^{2}+C_{2}r^{\alpha}t-MC_{1}r^{2+\alpha} \right]\\
&& -\left[H_{ni}(0,0,0)x_{i}r-C_{1}r|x'|^{1+\alpha} -C_{1}|x'|r^{1+\alpha}+C_{1}tr^{1+\alpha}\right] \\
&& - \left[- C_1 r |t|^{1+\alpha/2} - C_1 r |t| |x'|^{\alpha} + \tilde{H}(0,0,0)rt\right],
\end{array}
\end{equation*}
where to obtain \redref{a3a}{a}, we used the fact that $k \geq 0$ and to obtain \redref{a3b}{b}, we made use of the fact that $w(0, r,0)- u(0, r, 0)= - M C_1r^{2+\alpha}$ along with \cref{ty}.
Again since $|x'| \leq r$ and  $|t| \leq r^2$, we can deduce that $w \leq u+ k$ on $A_3$ provided $M$ is  sufficiently large.

    \item[On $A_4$, i.e., when  $\{t=-r^2\}$] Finally in this case we have 
\begin{equation*}
\begin{array}{rcl}
w(x',s,-r^{2})-u(x',s,-r^{2})-k& \leq& [w(x',s,-r^{2})-w(0,s,0)]-[u(x',s,-r^{2})-u(0,s,0)]\\
&\overset{\cref{ty}}{\leq}&  H_{ni}x_{i}s-MC_{1}r^{\alpha}|x'|^{2}-C_{2}r^{2+\alpha}\\ 
&&-\left[H_{ni}(0,0,0)x_{i}s-C_{1}s|x'|^{1+\alpha} -C_{1}|x'|s^{1+\alpha}+C_{1}r^2s^{1+\alpha}\right]\\
&& -\left[- C_1 s r^{2+\alpha} - C_1 s r^2 |x'|^{\alpha} + \tilde{H}(0,0,0)sr^2\right].
\end{array}
\end{equation*}
Now  since $|s|, |x'| \leq r$, the conclusion $w \leq u+ k$  in this case, likewise follows by choosing $C_{2}$ large enough. 

\end{description}


Now similar to the calculations from  \cite[Section 4]{SS}, we find that
\begin{equation*}
-w_{t}+F(D^{2}w,Dw)\geq r^{\alpha}\big[-C_{2}+2\lambda(C_{0}-MC_{1})-2MC_{1}\Lambda(n-1)\big]-Cr.
\end{equation*}
Now having chosen $C_{1},C_{2}$ and $M$ sufficiently large which guarantees that \cref{s5} holds, we  can  now  choose $C_{0}$ sufficiently large to ensure that  for $0 <  r< 1$, the following holds
\begin{equation*}
-w_{t}+F(D^{2}w,Dw)\geq0\quad \text{in}\ Q_{r}\times(-r^2,0].
\end{equation*}
Thus, thanks to \cref{s5} and the fact that $w$ is a subsolution, we can now apply the  comparison principle to conclude that $w\leq u+k$ everywhere in $Q_{r}\times(-r^{2},0]$.

Now, if $k> 0$,  this means that $w(0,s,0)=u(0,s,0)+k$ for some $s\in(0,r)$,  which is a  contradiction to the strong maximum principle. On the other hand, if $k=0$,  this would then contradict H\"{o}pf  lemma, since $\partial_{n}(u-w)=0$ at the origin. Thus \cref{s3} and \cref{s31} holds and  for some $C$ universal (by translating the origin), we have
\begin{equation}\label{s80}
\left|u(x',x_n,t)-u_{n}(x',0,t)x_n-\frac{1}{2}H_{nn}(x',0,t)r^{2}\right| \leq Cx_n^{2+\alpha} \qquad  \text{for}\ \  0< r< 1,
\end{equation}
and for all $(x', 0, t) \in Q^{0} (1)$. Thus, \cref{s80} coupled with  the fact that $u_n$ restricted to $\{x_n=0\}$ is in $\hh^{1+\alpha}$ with the following estimates
\begin{equation*}
\begin{cases}
|u_n(x', 0, t) - u_n(0) - H_{ni} x_i| \leq C(|x'| +|t|^{1/2})^{2+\alpha},
\\
| H_{nn}(x', 0, t) - H_{nn}(0,0)| \leq C(|x|'+ |t|^{1/2})^{\alpha},
\end{cases}
\end{equation*}
implies the conclusion of the lemma. 
\end{proof}

We now state  and prove the relevant compactness lemma which allows the passage to more general non-homogeneous equations.


\begin{thm}\label{compact}
Suppose that $F$ satisfies the structural condition \descref{H1}{H1} with  $F(0,0,0,0)=0$  and  assume that it  has a modulus of continuity $\rho$ in the $(x,t)$ variable,  more precisely it satisfies 
\begin{equation*}
|F(M,p,x,t)-F(M,p,y,s)|\leq \overline{C}(|M|+|p|)\rho( (|x-y|+ |t-s|^{1/2}).
\end{equation*}
 Furthermore,  assume that  $\rho$ satisfies  $\rho(r)\leq\phi(r)$ for all $r\in[0,1)$ for another modulus of continuity $\phi$.  Now let   $u\in C(B^{+}_{1}\times(-1,0])$ with  $\|u\|_{L^{\infty}(B^{+}_{1}\times(-1,0])}\leq1$ be  a viscosity solutions of
\begin{equation*}
\left\{
\begin{aligned}{}
-u_{t}+F(D^{2}u,Du,x,t)&=0\quad \text{in}~~Q^{+}_{1},\\
u&=0\quad\text{on}~~~Q^{*}_{1}.
\end{aligned}
\right.
\end{equation*}
Then there exist universal constants $\delta, \gamma, \beta \in (0,1)$ such that if  $\rho(1) <  \delta$, then for some $A \in \mathbb{R}$ and  $H \in S^n$ solving
\begin{equation}\label{des1}
F(H, Ae_n, 0,0) =0,
\end{equation}
the following conclusion holds
\begin{equation}\label{des2}
||u- Ax_n - \frac{1}{2}\langle Hx,x\rangle ||_{L^\infty(Q_{\gamma}^+)} \leq \gamma^{2+\beta}.
\end{equation}
\end{thm}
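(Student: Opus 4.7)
The plan is to argue by compactness and contradiction, following the standard Caffarelli-type approximation scheme. Suppose the conclusion fails for some choice of $\gamma, \beta$ (to be determined). Then there exist sequences of operators $F_k$ satisfying \descref{H1}{H1} with $F_k(0,0,0,0)=0$, moduli $\rho_k$ with $\rho_k \leq \phi$ and $\delta_k := \rho_k(1) \to 0$, and bounded viscosity solutions $u_k$ of $-\partial_t u_k + F_k(D^2 u_k, Du_k, x, t) = 0$ in $Q_1^+$ with $u_k=0$ on the flat boundary, such that for every $A \in \mathbb{R}$ and $H \in S^n$ with $F_k(H, Ae_n, 0,0)=0$ we have $\|u_k - Ax_n - \tfrac{1}{2}\langle Hx,x\rangle\|_{L^\infty(Q_\gamma^+)} > \gamma^{2+\beta}$.

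\textbf{Extraction of limits.} The $u_k$ enjoy uniform boundary regularity (for instance, the $\mathcal{H}^{1+\alpha}$ estimate of \cref{ub} applied with $g\equiv 0$ and $f\equiv 0$, combined with interior estimates) on compact subsets of $Q_1^+ \cup Q_1^0$, so by Arzelà--Ascoli a subsequence converges locally uniformly to some $u_\infty \in C(\overline{Q_1^+})$ with $\|u_\infty\|_\infty \leq 1$ and $u_\infty = 0$ on $Q_1^0$. For the operators, \descref{H1}{H1} gives uniform Lipschitz control in $(M,p)$, while $\rho_k(r) \leq \rho_k(1) \to 0$ for $r \in [0,1]$ gives uniform control in $(x,t)$; hence along a further subsequence $F_k \to F_\infty$ locally uniformly on $S^n \times \mathbb{R}^n \times \overline{Q_1}$. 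The uniform vanishing of the modulus forces $F_\infty$ to be independent of $(x,t)$, i.e.\ $F_\infty = F_\infty(M,p)$, and $F_\infty$ inherits \descref{H1}{H1} together with $F_\infty(0,0)=0$.

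\textbf{Limit problem and application of \cref{sl7}.} Stability of viscosity solutions yields that $u_\infty$ solves $-\partial_t u_\infty + F_\infty(D^2 u_\infty, Du_\infty) = 0$ in $Q_1^+$ with $u_\infty = 0$ on $Q_1^0$. Since $F_\infty$ is translation invariant, \cref{sl7} applies at the origin: writing $A_\infty := \partial_n u_\infty(0,0)$ (tangential derivatives vanish because $u_\infty = 0$ on $\{x_n=0\}$) and $H_\infty := H(0,0)$, we obtain $F_\infty(H_\infty, A_\infty e_n)=0$ and a universal constant $C$ such that
\begin{equation*}
\bigl| u_\infty(x,t) - A_\infty x_n - \tfrac{1}{2}\langle H_\infty x, x\rangle \bigr| \leq C (|x| + |t|^{1/2})^{2+\alpha}, \qquad (x,t) \in Q_{1/2}^+.
\end{equation*}
Fix $\beta \in (0,\alpha)$ and choose $\gamma$ small (depending only on $C$ and $\alpha, \beta$) so that $C(2\gamma)^{2+\alpha} \leq \tfrac{1}{4}\gamma^{2+\beta}$.

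\textbf{Transfer back to $u_k$ and contradiction.} Uniform convergence $u_k \to u_\infty$ on $\overline{Q_\gamma^+}$ gives, for all large $k$,
\begin{equation*}
\bigl\| u_k - A_\infty x_n - \tfrac{1}{2}\langle H_\infty x, x\rangle \bigr\|_{L^\infty(Q_\gamma^+)} \leq \tfrac{1}{2}\gamma^{2+\beta}.
\end{equation*}
It remains to replace $H_\infty$ by a matrix $H_k$ satisfying exactly $F_k(H_k, A_\infty e_n, 0,0)=0$; this is produced by the intermediate value theorem along the direction $I$, using uniform ellipticity of $F_k$ and the fact that $F_k(H_\infty, A_\infty e_n, 0,0) \to F_\infty(H_\infty, A_\infty e_n) = 0$, which yields $H_k = H_\infty + s_k I$ with $s_k \to 0$. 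For $k$ large, this perturbation costs less than $\tfrac{1}{2}\gamma^{2+\beta}$ on $Q_\gamma^+$, so
\begin{equation*}
\bigl\| u_k - A_\infty x_n - \tfrac{1}{2}\langle H_k x, x\rangle \bigr\|_{L^\infty(Q_\gamma^+)} \leq \gamma^{2+\beta},
\end{equation*}
contradicting the choice of $u_k$ and producing the required universal $\delta$. The main obstacle is the first step: establishing sufficient uniform regularity of the $u_k$ up to the flat boundary so that Arzelà--Ascoli can be invoked, together with verifying that the limit $F_\infty$ is genuinely translation invariant in $(x,t)$ — both of which hinge on the uniform smallness $\rho_k(1) \to 0$ and the fixed majorant $\phi$.
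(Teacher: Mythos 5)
Your proposal is correct and follows essentially the same route as the paper: a contradiction/compactness argument extracting limits $u_k\to u_\infty$, $F_k\to F_\infty$ (translation invariant), applying \cref{sl7} to the limit, fixing $\beta<\alpha$ and $\gamma$ via $C(2\gamma)^{2+\alpha}\lesssim\gamma^{2+\beta}$, and transferring back with a small perturbation of $H_\infty$ (the paper perturbs by $a_k\, e_n\otimes e_n$ rather than $s_k I$, an immaterial difference) to restore the exact compatibility $F_k(H_k,A_\infty e_n,0,0)=0$. No gaps beyond those already present in the paper's own argument.
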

\begin{proof}
%
The proof is by contradiction, suppose on the contrary,    for all $k\in\N$ there exists $F_{k}, u_k,  \rho_{k}$ satisfying the hypothesis of the lemma such that $\rho_{k}(1)< 1/k$,  $\rho_{k}(r)< \phi(r)$ and  $u_{k}\in C(B^{+}_{1}\times(-1,0])$ which solves 
\begin{equation*}
\begin{cases}
-(u_k)_{t}+F_{k}(D^{2}u_{k},Du_{k},x,t)&=0~~\text{in}~~Q^{+}_{1},\\
u_{k}=0~~\text{on}~~~Q^{*}_{1},
\end{cases}
\end{equation*}
the hypothesis \cref{des1} and \cref{des2} simultaneously doesn't hold for any choice of $A, H, \gamma, \beta$.

Since $\|u_k\|_{L^{\infty}(B^{+}_{1}\times(-1,0])}\leq1$, therefore by using the boundary $\hh^{\alpha}$-estimate we find that $\|u_{k}\|_{\hh^{\alpha}(\overline{Q_{1/2}^+)}}\leq C$. Consequently from  Arzela-Ascoli,  we can extract a subsequence of $\{u_k\}$ which converges uniformly to some $u_{0}$.  Also by using the equicontinuity of $F_k$, 's we note that upto a subsequence $\{F_k\}$'s  converges locally uniformly on $\Omega\times\R^{n}\times S(n)$ to some $F_{0}$.  Therefore, by using  the stability result for  viscosity solutions we obtain that $u_{0}$ solves 
\begin{equation*}
\left\{
\begin{aligned}{}
-(u_0)_{t}+F_{0}(D^{2}u_{0},Du_{0})&=0~~\text{in}~~Q^{+}_{1},\\
u_{0}&=0~~\text{on}~~~Q^{*}_{1}.
\end{aligned}
\right.
\end{equation*}
Since $F_{0}$ satisfies all the assumptions of \cref{sl7},  thus for some $A_0, H_0$ we have that 
\begin{equation*}
\begin{cases}
F_0( H_0, A_0)=0,
\\
\left|u_0(x,t) - A_0 x- \frac{1}{2}\langle  H_0x, x\rangle \right| \leq C (|x| + |t|^{1/2})^{2+\alpha}.
\end{cases}
\end{equation*}
Since $F_k \to F_0$ locally,  therefore we  can ensure that for some $a_k$ with $|a_k| = o(1)$ and   $H_k= H_0  + a_k e_n \otimes e_n $  solves
\begin{equation}\label{b0}
F_k( H_k, A_0, 0, 0)=0.
\end{equation}
This can be seen as follows, since $F_0( H_0, A_0)=0$, therefore we must have $F_k ( H_0, A_0,0,0)= o(1)$. 
Now from the uniform ellipticity of $F_k$ we  note that  for any $a$ we have 
\begin{equation}\label{com1}
o(1) -  \Lambda |a| \leq F_k( H_0 + a  e_n \otimes e_n, A_0, 0, 0) \leq o(1) +  \Lambda |a|.
\end{equation}
Thus from \cref{com1}, we  can deduce that for some $a_k$ with $|a_k|=o(1)$, estimate \cref{b0} holds. 
We now choose some $\beta <  \alpha$ and corresponding to such a $\beta$, let $\gamma \in (0, 1/2)$ be  such that
\begin{equation}\label{b1}
C (2\gamma)^{2+\alpha}= \frac{\gamma^{2+\beta}}{2}.
\end{equation}
Now since  $u_k \to u_0$ uniformly on $\overline{Q_{1/2}^+}$,  therefore using \cref{b0} and \cref{b1} we see that for large enough $k's$, corresponding to  $H_k$ and $A_k=A_0$, both \cref{des1} and  \cref{des2} holds which is a contradiction for all such $k's$. 
\end{proof}

\subsection{\texorpdfstring{$\hh^{2+\alpha}$}. differentiability at lateral boundary points}
With the compactness lemma in hand, we  now establish the $\hh^{2+\alpha}$  regularity at  lateral boundary  points for more general equations. 
\begin{thm}\label{mn1}
Suppose that $F$ satisfies $\descref{H1}{H1}$ and $\descref{H2}{H2}$ and  let $\Omega$ be a $\hh^{2+\bar{\alpha}}$ domain and $f \in \hh^{\bar{\alpha}}(\overline{\Omega})$.  Let $u$ be a viscosity solution of
\begin{equation*}
-u_{t}+F(D^{2}u,Du,x,t)=f(x,t)~~~\text{in}~\Omega \cap Q_2,
\end{equation*}
such that $u|_{\mathcal{P}\Om}=g\in \hh^{2+\bar{\alpha}}(\mathcal{P}\Om)$.  Assume that $S\Omega \cap Q_1 = \mathcal{P}\Om \cap Q_1$ and  also that  $(0,0) \in   \mathcal{P}\Om$. Then there exists $\beta \in (0,1)$ depending on $\bar{\alpha}, n, \lambda, \Lambda$ ,  a function $H\in \hh^{\beta}(\mathcal{P}\Om \cap Q_{1/2})$ which behaves like the "spatial hessian" of $u$ and  a function $H_1 \in \hh^{\beta}(\mathcal{P}\Om \cap Q_{1/2})$ which behaves like the "time derivative" of $u$  on $\mathcal{P}\Om$ such that there holds:
\begin{equation*}
F(H(x_{0},t_{0}),Du(x_{0},t_{0}),x_{0},t_{0}) - H_1(x_0, t_0) =f(x_{0},t_{0})\qquad \text{for~each} \ \ (x_{0},t_{0})\in  S\Omega \cap  Q_{1/2}.
\end{equation*}
Moreover the following estimate holds:
\begin{equation}\label{des}
\begin{array}{l}
\left|u(x,t)-u(x_{0},t_{0})-\langle Du(x_{0},t_{0}), (x-x_{0})\rangle -\frac{1}{2}\langle H(x_{0},t_{0})(x-x_{0}),(x-x_{0})\rangle - H_1(x_0,t_0) (t- t_0)\right|\\
\hfill \leq CW(|x-x_{0}|+|t-t_{0}|^{1/2})^{2+\beta},
\end{array}
\end{equation}
where
\[ W:=\|u\|_{L^{\infty}(\Omega)}+\|f\|_{\hh^{\bar{\alpha}}(\Omega)}+\|g\|_{\hh^{2+\bar{\alpha}}(\partial_p\Omega)}.\] 
\end{thm}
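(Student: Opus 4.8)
The plan is the following: first, reduce by the usual localization, boundary-flattening and data-subtraction to a normalized problem on a half-cylinder with zero lateral data and a right-hand side small in $\hh^{\bar\alpha}$; second, obtain an $\hh^{2+\beta}$ polynomial expansion at each lateral boundary point by iterating the one-scale improvement of \cref{compact}; third, unwind the reductions and read off the $\hh^\beta$-regularity of the coefficient functions. For the reduction, since $\Om$ is $\hh^{2+\bar\alpha}$ and $S\Om\cap Q_1=\mathcal{P}\Om\cap Q_1$, near $(0,0)$ the lateral boundary is a $\hh^{2+\bar\alpha}$ graph, and straightening it produces an equation $-v_t+\tilde F(D^2v,Dv,x,t)=\tilde f$ on a half-cylinder, with $\tilde F$ still satisfying \descref{H1}{H1} and \descref{H2}{H2} (the structural constants changing only by factors controlled by the $\hh^{2+\bar\alpha}$ norm of the chart) and $\tilde f\in\hh^{\bar\alpha}$. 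Subtracting a $\hh^{2+\bar\alpha}$ extension $\bar g$ of the straightened boundary datum, whose free normal $2$-jet at $(0,0)$ is chosen so that the subtracted equation holds at $(0,0)$, and dividing by $W$, one reaches a function $w$ with $|w|\le1$, $w\equiv0$ on the flat boundary, solving $-w_t+\hat F(D^2w,Dw,x,t)=\hat f$, where $\hat F$ obeys \descref{H1}{H1}, $\hat F(0,0,0,0)=0$, $\hat f\in\hh^{\bar\alpha}$ with $\hat f(0,0)=0$, and $\hat F$ has an $(x,t)$-modulus of continuity of the form $C(1+|M|+|p|)\,r^{\bar\alpha}$. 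A final dilation by a small universal radius $r_0$ shrinks the modulus constant, the first-order constant of \descref{H1}{H1}, and $\|\hat f\|_{\hh^{\bar\alpha}}$, so that all smallness hypotheses of \cref{compact} hold — and, as matters in the iteration, continue to hold at every dyadic sub-scale. Throughout, I would use the evident variant of \cref{compact} that also allows a bounded first-order term, a right-hand side with small sup-norm, and the extra additive constant $1$ in the modulus; its proof is the one already given, since all three extra ingredients vanish in the compactness limit.

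Second, fix $\beta=\tfrac{1}{2}\min(\alpha,\bar\alpha)$ with $\alpha$ from \cref{sl7}, let $\gamma\in(0,1/2)$ be the radius furnished by \cref{compact} for this $\beta$, and prove by induction the existence of quadratic polynomials $P_k(y,s)=\bar A_ky_n+\tfrac{1}{2}\langle\bar H_ky,y\rangle$, vanishing on $\{y_n=0\}$, with $\bar H_k$ having zero $(n-1)\times(n-1)$ block, with $\hat F(\bar H_k,\bar A_ke_n,0,0)=0$, with $|\bar A_{k+1}-\bar A_k|\le C\gamma^{k(1+\beta)}$ and $|\bar H_{k+1}-\bar H_k|\le C\gamma^{k\beta}$, and with $\|w-P_k\|_{L^\infty(Q_{\gamma^k}^+)}\le\gamma^{k(2+\beta)}$. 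For the inductive step, the rescaling $\tilde w(z,\tau):=\gamma^{-k(2+\beta)}(w-P_k)(\gamma^kz,\gamma^{2k}\tau)$ has $|\tilde w|\le1$ on $Q_1^+$, vanishes on $Q_1^0$, and solves $-\tilde w_\tau+\tilde F_k(D^2\tilde w,D\tilde w,z,\tau)=\tilde f_k$, where $\tilde F_k(M,p,z,\tau)=\gamma^{-k\beta}\hat F(\gamma^{k\beta}M+\bar H_k,\ \gamma^{k(1+\beta)}p+\bar A_ke_n+\gamma^k\bar H_kz,\ \gamma^kz,\gamma^{2k}\tau)$ and $\tilde f_k(z,\tau)=\gamma^{-k\beta}\hat f(\gamma^kz,\gamma^{2k}\tau)$. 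Using $1$-homogeneity of the Pucci operators, $\tilde F_k$ satisfies \descref{H1}{H1} with the same $\lambda,\Lambda$ and with the first-order constant multiplied by $\gamma^k$; and because $\beta<\bar\alpha$ and $\hat f$ and the modulus of $\hat F$ vanish to order $\bar\alpha$ at the base point, both $\|\tilde f_k\|_{L^\infty}$ and the $(z,\tau)$-modulus of $\tilde F_k$ at radius $1$ are bounded by their $k=0$ values times $\gamma^{k(\bar\alpha-\beta)}$, hence below $\delta$ at every step; moreover $\tilde F_k(0,0,0,0)=\gamma^{-k\beta}\hat F(\bar H_k,\bar A_ke_n,0,0)=0$. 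Applying the variant of \cref{compact} gives $A_k\in\R$ and a symmetric $H_k$ — which, since the limit solution in the proof of \cref{compact} vanishes on $\{z_n=0\}$ and so, by \cref{sl7}, has boundary Hessian with zero tangential block, may be taken with zero $(n-1)\times(n-1)$ block — with $\tilde F_k(H_k,A_ke_n,0,0)=0$ and $\|\tilde w-A_kz_n-\tfrac{1}{2}\langle H_kz,z\rangle\|_{L^\infty(Q_\gamma^+)}\le\gamma^{2+\beta}$. Then $P_{k+1}:=P_k+\gamma^{k(1+\beta)}A_ky_n+\tfrac{1}{2}\gamma^{k\beta}\langle H_ky,y\rangle$ closes the induction, the increment bounds coming from the universal bounds on $|A_k|,|H_k|$ and the relation $\hat F(\bar H_{k+1},\bar A_{k+1}e_n,0,0)=0$ from $\tilde F_k(H_k,A_ke_n,0,0)=0$.

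Third, the geometric increments give $\bar A_k\to\bar A_\infty$, $\bar H_k\to\bar H_\infty$ with $\hat F(\bar H_\infty,\bar A_\infty e_n,0,0)=0$, and summing the errors over dyadic shells, $|w(y,s)-\bar A_\infty y_n-\tfrac{1}{2}\langle\bar H_\infty y,y\rangle|\le C(|y|+|s|^{1/2})^{2+\beta}$. Undoing the dilation, restoring the factor $W$, adding back the $2$-jet of $\bar g$ at $(0,0)$, and unstraightening gives \cref{des} at $(x_0,t_0)=(0,0)$ with constant $CW$, where $H(0,0)$ is the reconstituted spatial Hessian (tangential block from $D^2g$, the rest from $\bar H_\infty$ and the chart) and $H_1(0,0)$ the reconstituted time derivative (from $\bar g_t$ and the time part of the chart); the identity $F(H,Du,0,0)-H_1=f(0,0)$ is $\hat F(\bar H_\infty,\bar A_\infty e_n,0,0)=0$ transported back through the reductions. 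Running the same argument at each $(x_0,t_0)\in S\Om\cap Q_{1/2}$, using the chart centered there, yields the pointwise expansion with a uniform constant $CW$; and $(x_0,t_0)\mapsto(Du,H,H_1)\in\hh^\beta$ then follows from the standard two-point argument — for boundary points at parabolic distance $d$, subtracting the expansions on the common cylinder of size $\sim d$ forces the difference of the two quadratics, hence its Hessian, gradient and constant parts, to be $O(d^\beta)$, $O(d^{1+\beta})$, $O(d^{2+\beta})$ — together with the fact that the tangential parts of these jets are controlled by $g\in\hh^{2+\bar\alpha}$ and the chart, and the normal derivative by the boundary gradient $G$ of \cref{thm_deff}.

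The main obstacle is the propagation of smallness through the iteration: one needs the hypotheses of \cref{compact} to hold at \emph{every} scale $\gamma^k$, not merely the first. This succeeds because \descref{H2}{H2} is not scale-invariant but fails in the favourable direction — the rescaled operator $\tilde F_k$ has an $\bar\alpha$-modulus, and the rescaled datum $\tilde f_k$ a sup-norm, carrying the good factor $\gamma^{k(\bar\alpha-\beta)}$, which is precisely why one takes $\beta<\bar\alpha$ and arranges $\hat f(0,0)=0$ — while the single preliminary dilation by a small $r_0$ brings the $k=0$ quantities below $\delta$. The second delicate point, without which the iteration would close only with a constant $>1$ per step, is that the polynomial from \cref{compact} genuinely vanishes on $\{x_n=0\}$ (its tangential Hessian block is zero); this is inherited from \cref{sl7}, where the homogeneous limit solution, vanishing identically on the flat boundary, has boundary Hessian with zero $(n-1)\times(n-1)$ block.
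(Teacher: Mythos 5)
Your proposal is correct and follows essentially the same route as the paper: the same chain of reductions (flattening, subtracting an extension of $g$, normalizing so that the frozen operator and datum vanish at the base point, then a small dilation to meet the smallness hypothesis of \cref{compact}), followed by the same geometric iteration of \cref{compact} at scales $\gamma^k$ using $\beta<\bar\alpha$ to propagate smallness, and a limiting/real-analysis argument. The only notable (and harmless) deviation is that you carry the first-order term through the rescaling rather than freezing the gradient as in the paper's Reduction 3, and you are somewhat more explicit than the paper about the variant of \cref{compact} needed and about the two-point argument giving $H,H_1\in\hh^{\beta}$.
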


\begin{proof}
We make the following reductions:
\begin{description}
\item[Reduction 1] As the domain $\Omega\in \hh^{2+\bar{\alpha}}$,  for any $(x_{0},t_{0})\in S\Omega$ there exists  $r> 0$ and $h\in \hh^{2+\alpha}{Q'_{r}(0)}$ such that
$\Omega\cap Q_{r}(x_{0},t_{0})=\{(x,s)\in Q_{r}~~|~~x_{n}> h(x',s)\}$.  Now let us define new function $v(y,t)=u(x,s)$ where $y'=x'$,  $y_{n}=x_{n}-h(y',s)$ and $s=t$.  It is easy to see that (after a rescaling),   $v$ solves the following Dirichlet problem
\begin{equation*}
\left\{
\begin{aligned}{}
-v_{s}+\tilde{F}(D^{2}v,Dv,y,s)&=\tilde{f}(y,s)~~\text{in}~~Q^{+}_{1},\\
v&=\tilde{g}~~\text{on}~~~Q^{*}_{1},
\end{aligned}
\right.
\end{equation*}
where $\tilde{f}(y,s)=f(x,s)+h_{s}(y',s)v_{n}(y,s)$, $\tilde{g}(y,s)=g(x,s)$ and 
\begin{equation*}
\tilde{F}(M,p,y,s)=F(D\psi^{T}(x)MD\psi(x)+[p\cdot\psi_{i,j}(x)]_{1\leq i,j\leq n},   \langle p,  D\psi(x)\rangle ,x,s),
\end{equation*}
where $y=\psi(x)$. It is easy to observe that $\tilde{F}$ satisfies the structural conditions $\descref{H1}{H1}$ and $\descref{H2}{H2}$.  Furthermore, if $F(0,0,0,0)=0$ then $\tilde{F}(0,0,0,0)=0$.  Therefore, it is enough to prove the theorem in $Q^{+}_{1}$ with boundary condition on $Q^{0}_{1}$. 
\item[Reduction 2]~ Now by taking a $\hh^{2+\alpha}$-extension of $g$ on $\Omega$ followed  by letting $v=u-g$,  we can assume that $g=0$ on $Q^{0}_{1}$.  
\item[Reduction 3] Since it is already known that $Du$ is Holder continuous upto the boundary, therefore, by letting $\tilde{F}(M,x,t)=F(M,Du(x,t),x,t)$, we may as well assume that $F$ doesn't depend on the gradient.
\item[Reduction 4] Also if we let $v=u+Cx^{2}_{n}$,  we find that $v=0$ on $Q^{0}_{1}$ and solves the following
\begin{equation*}
\tilde{F}(D^{2}v,x,t)=\tilde{f}\quad \text{in}\ \ Q^{+}_{1}, 
\end{equation*}
where $\tilde{F}(M,x,t)=F(M-2Ce_{n}\otimes e_{n},x,t)-f(0,0)$ and $\tilde{f}(x,t)=f(x,t)-f(0,0)$.  In view of uniform ellipticity of $F$, we can choose a suitable value of $C$ such that $\tilde{F}(0,0,0)=0$. By an abuse of notation,  we continue denoting $\tilde{F}$ by $F$ and $\tilde{f}=f$. Thus without loss of generality we can assume that $F(0,0,0)=0$ and $f(0,0)=0$. 
\item[Reduction 5]  We can also  assume that $\|u\|_{L^{\infty}(Q^{+}_{1})}\leq1$. 
\item[Reduction 6] Rescaling  $u$ as follows,  $u_{r}(x,t)=u(rx,r^{2}t)$,  we see that $u_r$ solves  
 \begin{equation*}
-(u_{r})_{t}+F_{r}(D^{2}u_{r}, x,t)=f_{r}(x,t),
\end{equation*}
 where $F_{r}(M,x,t)=r^{2}F(M/r^{2},rx,r^{2}t)$ and $f_{r}(x,t)=r^{2}f(rx,r^{2}t)$.  It is important to notice that  both $F_r$ and $F$ have the same ellipticity. Moreover, the modulus of continuity $\rho_{r}$ of $\tilde{F}_{r}(M,x,t)-f_{r}(x,t)$ and $\rho$ of $\tilde{F}=F(M,x,t)-f(x,t)$ are related in the following such that $\rho_{r}(s)\leq  \rho(rs)$ holds for all $0< s< 1$.  Therefore, if we choose $r$ sufficiently small, then we can ensure that  $\rho_{r}(1)< \delta$,  where $\delta$ is as in the hypothesis  of  \cref{compact}. From here onwards we will work with $u_r$ instead of $u$ and continue denoting it by $u$ itself. 
\end{description}

Without loss of generality,  it suffices to establish the estimate \cref{des}  at the point $(x_0, t_0)=(0,0)$.  Also in order to prove such an estimate,  it suffices to  show that there exists   $\gamma \in (0,1)$, such that  corresponding to $r_k= \gamma^k$ for a given $k \in \mathbb{N}$,  there exists  $A_{k}\in\R$ and $H_{k}\in S(n)$ such that the following holds:
\begin{equation}\label{itr}
\begin{array}{l}
|A_{k}-A_{k+1}|\leq Cr^{1+\beta}_k,\\
 F(H_k, 0,0)=0,\\
|H_{k}-H_{k+1}|\leq Cr^{\beta}_{k},\\
|u(x,t)-A_{k}x_{n}-\langle H_{k}x,x\rangle|\leq r^{2+\beta}_{k},~~\text{for $(x,t) \in Q_{r_k}^+$}.
\end{array}
\end{equation}

Then by a standard real analysis argument, the desired estimate at $(0,0)$ follows with $H(0,0)= \lim_{k \to \infty} H_k$.  Taking $\gamma, \beta$  as in  \cref{compact},  without loss of generality, we may assume that $\beta <  \bar{\alpha}$. 
Since $\|u\|_{L^{\infty}(Q^{+}_{1})}\leq1$, we observe that for $k=0$, the choice $A_0=0$ and $H_0=0$ works. Now assume that \cref{itr} holds upto some  $k$ and then we show that this implies the validity of \cref{itr} for $k+1$.  For $(x,t)\in Q^{+}_{1}$, let 
\begin{equation*}
u_{k}(x,t)=\frac{u(r_{k}x,r^{2}_kt)-A_{k}r_{k}x_{n}-\langle H_{k}r_{k}x,r_{k}x\rangle}{r^{2+\beta}_{k}}. 
\end{equation*}
.  

 In view of the validity of the  $k$-th  step, we have  that $\|u_{k}\|_{L^{\infty}(Q^{+}_{1})}\leq 1$.  Furthermore,  $u_{k}$ solves
\begin{equation*}\left\{
\begin{aligned}{}
-(u_k)_{t}+F_{k}(D^{2}u_{k},x,t)&=f_{k}(x,t)\quad \text{in}~~Q^{+}_{1},\\
u_{k}&=0\quad \text{on}~~Q^{0}_{1},
\end{aligned}\right.
\end{equation*}
where
\begin{equation*}
F_{k}(M,x,t)=\frac{1}{r^{\beta}_{k}}F(r^{\beta}_{k}M+H_{k},r_{k}x,r^{2}_{k}t)\quad \text{and}\quad f_{k}(x,t)=\frac{1}{r^{\beta}_{k}}f(r_{k}x,r^{2}_{k}t).
\end{equation*}
Now since  $F$,  $f$ are h\"older continuous with exponent $\bar{\alpha}$,  $F(H_k,0,0)=f(0,0)=0$ and  $\beta< \bar{\alpha}$,  we can thus  ensure that the modulus of continuity $\rho_k$ of $(F_k-f_k)$ satisfies all the conditions of   \cref{compact}. Consequently by   \cref{compact}, we have that  there exists  constants $A$ and  $H$ satisfying the conclusion of    \cref{compact} such that there holds
\begin{equation*}
\begin{cases}
||u_k - Ax_n - \frac{1}{2} \langle  Hx, x\rangle ||_{L^{\infty}(Q_{\gamma}^+)} \leq \gamma^{2+\beta},\\
F_k(H, 0, 0)=0.
\end{cases}
\end{equation*}

Scaling back to $u$, we get that the last estimate in \cref{itr} holds for $k+1$ with $H_{k+1}= H_k + \gamma^{k\beta} H$ and $A_{k+1}= A_k + \gamma^{k(1+\beta)} A$.  Since $F_k(H,0,0)=0$, it follows that $F(H_{k+1}, 0, 0)=0$.  The other assertions in \cref{itr} are also easily verified. This proves the induction step and the conclusion follows.
\end{proof}

\subsection{\texorpdfstring{$\hh^{2+\alpha}$}. regularity in a  neighborhood of the boundary}
In this subsection,  analogous to \cite[Theorem 1.3]{SS}, we prove  $\hh^{2+\alpha}$ regularity in a small neighborhood of the boundary under the additional assumption that $F$ is continuously differentiable in $M$, more precisely in addition to \descref{H1}{H1} and \descref{H2}{H2}, we assume that 
\begin{equation}\label{as1}
||D_M F(M_1, p,x,t) - D_M F(M_2, p, x, t)|| \leq \omega(||M_1- M_2||),
\end{equation}
is satisfied for some modulus of continuity $\omega$.  In order to prove this regularity, we first  state a certain generalization of the flatness result of  Yu Wang (see \cite{Wa})  as established    in  \cite[Theorem 1]{dP}. As mentioned in the introduction,   in the elliptic case, such a result was first obtained by Savin in \cite{sa}. 

\begin{thm}\label{flatness}
Let $F$ satisfy the structure conditions as in \descref{H1}{H1}, \descref{H2}{H2} and \cref{as1} and let $u$ be a viscosity solution to 
\begin{equation*}
F(D^2u,Du, x, t) - u_t= f(x,t)\quad \text{in}\ \  Q_{r}(x_0, t_0).
\end{equation*}
Moreover, assume that the following normalization holds,
\begin{equation}\label{norm}
 F(0,0, x_0, t_0) = f(x_0, t_0).
 \end{equation}
 Then there exists a universal $\delta, \alpha_0> 0$ such that if 
 \begin{equation}\label{smallness}
 \sup_{Q_r(x_0, t_0)} |u| \leq \delta' r^2,\ \text{for some $\delta' \leq \delta$}
 \end{equation}
 then $u \in \hh^{2+\alpha_0} (Q_{r/2})$ with the following quantitative estimate
 \[
 ||u||_{\hh^{2+\alpha_0}(Q_{r/2})} \leq C \frac{\delta'}{r^{\alpha_0}}.
 \]
\end{thm}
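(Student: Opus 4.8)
\textit{Approach.} The plan is to recognize \cref{flatness} as the rescaled form of the parabolic ``flatness $\Rightarrow\hh^{2+\alpha_0}$'' theorem of Yu Wang \cite{Wa}, in the generalization of \cite[Theorem 1]{dP} (itself the parabolic counterpart of Savin's result \cite{sa}), applied in the normalized cylinder $Q_1$ centered at the origin. Concretely I would proceed in four steps: (i) reduce to the case $r=1$, $(x_0,t_0)=(0,0)$ by the anisotropic parabolic dilation; (ii) verify that the structural data $\descref{H1}{H1}$, $\descref{H2}{H2}$, \cref{as1} and the normalization \cref{norm} are preserved — in fact improved — under this dilation, and that \cref{smallness} turns into $\sup_{Q_1}|v|\le\delta'$; (iii) invoke \cite[Theorem 1]{dP} at scale one; (iv) scale back, tracking how the parabolic $\hh^{2+\alpha_0}$ norm transforms. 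Throughout one may assume $0<r\le1$, which is the case relevant for the applications.

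For (i)--(ii) I would set $v(x,t):=r^{-2}u(x_0+rx,\,t_0+r^2t)$ on $Q_1$; writing $(\,\cdot\,)=(x_0+rx,\,t_0+r^2t)$ one has $D^2v(x,t)=D^2u(\,\cdot\,)$, $Dv(x,t)=r^{-1}Du(\,\cdot\,)$ and $v_t(x,t)=u_t(\,\cdot\,)$, so $v$ is a viscosity solution in $Q_1$ of $F_r(D^2v,Dv,x,t)-v_t=f_r$ with $F_r(M,p,x,t):=F(M,rp,x_0+rx,t_0+r^2t)$ and $f_r(x,t):=f(x_0+rx,t_0+r^2t)$. Since $r\le1$, one checks directly that $F_r$ satisfies $\descref{H1}{H1}$ with the same $\lambda,\Lambda$ and first-order constant $Kr\le K$; satisfies $\descref{H2}{H2}$ with constant $\overline{C}r^{\bar\alpha}\le\overline{C}$; satisfies \cref{as1} with the same modulus $\omega$; obeys the normalization $F_r(0,0,0,0)=F(0,0,x_0,t_0)=f(x_0,t_0)=f_r(0,0)$; has $[f_r]_{\hh^{\bar\alpha}(Q_1)}=r^{\bar\alpha}[f]_{\hh^{\bar\alpha}(Q_r(x_0,t_0))}\le[f]_{\hh^{\bar\alpha}}$; and satisfies $\sup_{Q_1}|v|=r^{-2}\sup_{Q_r(x_0,t_0)}|u|\le\delta'$.

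For (iii)--(iv), the hypotheses of \cite[Theorem 1]{dP} now hold for $v$ on $Q_1$, yielding a universal $\alpha_0\in(0,\bar\alpha)$ and a universal $C$ with $v\in\hh^{2+\alpha_0}(Q_{1/2})$ and $\|v\|_{\hh^{2+\alpha_0}(Q_{1/2})}\le C\delta'$. Undoing the dilation, $u(y,s)=r^2v\big(\tfrac{y-x_0}{r},\tfrac{s-t_0}{r^2}\big)$; here $D^2u$ and $u_t$ transform with a factor $1$ and their $\hh^{\alpha_0}$-seminorms with a factor $r^{-\alpha_0}$, while every lower-order piece of the norm picks up a nonnegative power of $r\le1$, so $\|u\|_{\hh^{2+\alpha_0}(Q_{r/2}(x_0,t_0))}\le Cr^{-\alpha_0}\|v\|_{\hh^{2+\alpha_0}(Q_{1/2})}\le C\delta'/r^{\alpha_0}$, which is exactly the claimed estimate. (One could alternatively freeze the gradient first — replacing $F_r$ by $F_r(M,Dv(x,t),x,t)$, legitimate since $Du$ is already parabolically H\"older by \cref{ub}, as in Reductions~1--4 of the proof of \cref{mn1} — but this is unnecessary, as \cite[Theorem 1]{dP} already accommodates the first-order term.)

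\textit{Main obstacle.} The only point that genuinely needs care is (iii): one must confirm that \cite[Theorem 1]{dP} is available in precisely the generality invoked here — for equations carrying a first-order term and $(x,t)$-dependence of the type permitted by $\descref{H1}{H1}$--$\descref{H2}{H2}$, and with the ``affine normalization'' $F(0,0,0,0)=f(0,0)$ playing the role of Savin's hypothesis that the equation be close to a solvable one. The requisite smallness of the oscillation of $(x,t)\mapsto F(\,\cdot\,,\,\cdot\,,x,t)-f(x,t)$ near the base point is then automatic from $\descref{H2}{H2}$ together with the $\hh^{\bar\alpha}$-regularity of $f$ once the dilation has shrunk the cylinder (it is, in effect, absorbed into the choice of the universal constant $\delta$). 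Granting the cited theorem in that form, steps (i), (ii) and (iv) are routine scaling bookkeeping with no real difficulty.
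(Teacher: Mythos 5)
Your proposal is correct and takes essentially the same route as the paper: Theorem~\ref{flatness} is stated there without proof, as a direct quotation of the generalization \cite[Theorem 1]{dP} of Yu Wang's flatness result \cite{Wa}, so the only content is the scaling reduction you carry out, and your bookkeeping (hypotheses preserved under the parabolic dilation, $\hh^{2+\alpha_0}$ norm picking up $r^{-\alpha_0}$ only through the top-order seminorm) is accurate.
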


Combining  the $\hh^{2+\alpha}$ decay estimate as in  \cref{mn1} with the flatness result stated above, we   have the following result on the twice differentiability near the lateral boundary. Let us define the following notation $\Omega_{\delta_0}:= \{(x,t) \in \Omega: d((x,t), \mathcal{P}\Om) <  \delta_0\}$, where  $d((x,t), \mathcal{P}\Om)$ denotes the parabolic distance of the point $(x,t)$ from the parabolic boundary $\mathcal{P}\Om$. 
\begin{thm}\label{mn2}
Under the   hypothesis of  \cref{mn1} with the additional assumption  that $F$ satisfies \cref{as1}, we have that  there exists universal constants $\delta_0, \beta_0> 0$, such that $u \in \hh^{2+\beta_0} (\overline{\Omega_{ \delta_0} \cap Q_{1/2}})$.

By a standard covering argument, it further follows that $u \in \hh^{2+\beta_0} (\overline{\Omega_{ \delta_0} \cap Q_r})$ for all $r < 1$. 
\end{thm}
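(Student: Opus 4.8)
The plan is to combine the boundary $\hh^{2+\beta}$ decay furnished by \cref{mn1} with the interior ``flatness $\Rightarrow\hh^{2+\alpha_0}$'' estimate of \cref{flatness}, and then to glue the resulting one‑sided interior regularity to the boundary Taylor data by a standard two‑point (Campanato‑type) argument. After the normalisation $u\mapsto u/W$ — which preserves \descref{H1}{H1}, \descref{H2}{H2} and \cref{as1} and makes $W\le 1$ — all constants below depend only on $n,\lambda,\Lambda,K,\bar\alpha$ and the modulus in \cref{as1}, so the resulting $\delta_0,\beta_0$ are universal. Since \cref{mn1} is a local statement at lateral boundary points and $\mathcal{P}\Om\cap Q_1=S\Om\cap Q_1$ is purely lateral, applying it around each point of $S\Om\cap Q_{3/4}$ yields, with uniform constants: the gradient $Du\in\hh^{1+\alpha}$ up to the boundary (already from \cref{ub}), H\"older functions $H,H_1\in\hh^\beta$ with $F(H(y_0,\tau_0),Du(y_0,\tau_0),y_0,\tau_0)-H_1(y_0,\tau_0)=f(y_0,\tau_0)$ for every $(y_0,\tau_0)\in S\Om\cap Q_{3/4}$, and the decay estimate \cref{des}.

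\emph{Interior step.} Fix $(z,s)\in\Om_{\delta_0}\cap Q_{1/2}$, put $d:=d((z,s),\mathcal{P}\Om)$, and (using $\Om\in\hh^{2+\bar\alpha}$ and laterality) pick a reference point $(x_0,s)\in S\Om\cap Q_{3/4}$ with $|z-x_0|\simeq d$ and a small universal $c\in(0,1)$ with $Q_{cd}(z,s)\subset\Om\cap Q_2\cap\{t\le s\}$. With $P(x,t):=u(x_0,s)+\langle Du(x_0,s),x-x_0\rangle+\tfrac12\langle H(x_0,s)(x-x_0),x-x_0\rangle+H_1(x_0,s)(t-s)$ and $w:=u-P$, \cref{des} gives $\sup_{Q_{cd}(z,s)}|w|\le Cd^{2+\beta}$, while $w$ solves $G(D^2w,Dw,x,t)-w_t=f$ in $Q_{cd}(z,s)$, where $G(M,q,x,t):=F(M+H(x_0,s),\,q+Du(x_0,s)+H(x_0,s)(x-x_0),\,x,t)-H_1(x_0,s)$; this $G$ has the ellipticity of $F$ and inherits \descref{H1}{H1}, \descref{H2}{H2}, \cref{as1} (with constants also depending on $\|H\|_{L^\infty}\le C$), and the boundary PDE above says exactly $G(0,0,x_0,s)=f(x_0,s)$. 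Setting $c_0:=f(z,s)-G(0,0,z,s)$ one gets $|c_0|\le Cd^{\bar\alpha}$ from this identity together with \descref{H2}{H2} and H\"older continuity of $f$, so $\bar w:=w+c_0(t-s)$ solves $G(D^2\bar w,D\bar w,x,t)-\bar w_t=f-c_0$ with the exact centre normalisation $G(0,0,z,s)=(f-c_0)(z,s)$ and still $\sup_{Q_{cd}(z,s)}|\bar w|\le Cd^{2+\beta}$ (assume $\beta<\bar\alpha$). Thus \cref{smallness} holds on $Q_{cd}(z,s)$ with $\delta'=\delta'(d)=C(cd)^{-2}d^{2+\beta}=Cd^{\beta}$; shrinking $\delta_0$ so that $\delta'(\delta_0)\le\delta$, \cref{flatness} gives $\bar w\in\hh^{2+\alpha_0}(Q_{cd/2}(z,s))$ with norm $\le C\delta'(d)(cd)^{-\alpha_0}=Cd^{\beta-\alpha_0}$; extracting the seminorm and Taylor‑expanding $\bar w$ at the centre, the bound $\sup_{Q_{cd/2}(z,s)}|\bar w|\le Cd^{2+\beta}$ and equivalence of norms on quadratic polynomials give $|D^2\bar w(z,s)|+|\bar w_t(z,s)|\le Cd^{\beta}$. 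Since $u-\bar w$ is a polynomial of parabolic degree two with Hessian $H(x_0,s)$ and time‑derivative $H_1(x_0,s)-c_0$, this yields, for all such $(z,s)$,
\[ |D^2u(z,s)-H(x_0,s)|+|u_t(z,s)-H_1(x_0,s)|\le Cd^{\beta}, \]
\[ [D^2u]_{\hh^{\alpha_0}(Q_{cd/2}(z,s))}+[u_t]_{\hh^{\alpha_0}(Q_{cd/2}(z,s))}\le Cd^{\beta-\alpha_0}, \]
where $[\,\cdot\,]$ is the parabolic H\"older seminorm; in particular $D^2u,u_t$ exist classically near $S\Om$ and extend continuously up to $S\Om\cap Q_{1/2}$ with values $H,H_1$.

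\emph{Gluing.} Put $\beta_0:=\min\{\beta,\alpha_0\}$ and $V:=D^2u$ in the interior, $V:=H$ on $S\Om\cap Q_{1/2}$ (and likewise for $u_t,H_1$). Given $p_1,p_2\in\overline{\Om_{\delta_0}\cap Q_{1/2}}$ at parabolic distance $\varrho$, with $d_i:=d(p_i,\mathcal{P}\Om)$ and $d_1\le d_2$: if $\varrho\gtrsim d_2$, pass from each $p_i$ to its nearest boundary point $q_i$ at cost $\le Cd_i^{\beta}\le C\varrho^{\beta}$ by the first estimate, and bound $|V(q_1)-V(q_2)|\le[H]_{\hh^{\beta}}|q_1-q_2|^{\beta}\le C\varrho^{\beta}$ since $|q_1-q_2|\le d_1+\varrho+d_2\lesssim\varrho$; if $\varrho\ll d_2$, then $d_1\simeq d_2=:d$, both points lie in a parabolic cylinder of radius $\simeq d$ about one of them which is contained in $\Om$, and the second estimate gives $|V(p_1)-V(p_2)|\le Cd^{\beta-\alpha_0}\varrho^{\alpha_0}\le C\delta_0^{\beta-\beta_0}\varrho^{\beta_0}$ (using $\varrho\le d\le\delta_0$). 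Hence $D^2u,u_t\in\hh^{\beta_0}(\overline{\Om_{\delta_0}\cap Q_{1/2}})$, and since $u,Du$ are already continuous up to the boundary, $u\in\hh^{2+\beta_0}(\overline{\Om_{\delta_0}\cap Q_{1/2}})$. Running this argument around every point of $\mathcal{P}\Om\cap Q_r$ gives the covering statement.

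\emph{Main obstacle.} The crux is the interior step: one must (i) produce the local operator $G$ for $w=u-P$ and recognise that the normalisation \cref{norm} demanded by \cref{flatness} is nothing but the boundary equation $F(H,Du,x_0,s)-H_1=f(x_0,s)$ produced by \cref{mn1}, up to the explicit correction $c_0(t-s)$; and (ii) verify the smallness \cref{smallness} with the \emph{sharp} rate $d^{2+\beta}$ — precisely the content of the $\hh^{2+\beta}$ decay in \cref{mn1}, and what forces the threshold $\delta_0$ (and, before the scaling, its dependence on $W$). Once the two displayed estimates are in hand, the concluding gluing is routine.
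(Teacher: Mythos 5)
Your proposal is correct and follows essentially the same route as the paper: approximate $u$ near an interior point by the boundary Taylor polynomial from \cref{mn1}, adjust the polynomial so that the normalization \cref{norm} holds at the interior point, verify the smallness \cref{smallness} with rate $d^{\beta}$, apply \cref{flatness}, and glue. The only (cosmetic) difference is that you achieve the normalization by adding the linear-in-time correction $c_0(t-s)$, whereas the paper perturbs by $a\,|x-x_0|^2$ with $|a|=O(r^{\bar\alpha})$ via uniform ellipticity; you also write out explicitly the two-point gluing argument that the paper leaves as ``standard''.
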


\begin{proof}
As in the previous proof, without loss of generality, we can assume that $g=0$, the boundary is flat and we can write $F(M,x,t)$ instead of $F(M,p, x, t)$.  Let $(x_0, t_0)$ be a point in $\Omega_{\delta_0} \cap Q_{1/2}$, then from the definition of such a set, there exists a point $(x_1,t_1) \in \mathcal{P}\Om \cap Q_{1/2}$ such that  $d:= |x_0- x_1| + |t_0-  t_1|^{1/2} <  \delta$.  Setting  $r= \frac{d}{4}$, we see that  $Q_r( x_0, t_0) \in \Omega$. Moreover by applying  \cref{mn1} to the boundary point $(x_1, t_1)$, we know that there exists a polynomial  $P(x,t) = Ax_n + \frac{1}{2} \langle  Hx, x\rangle $, such that
\begin{equation}\label{de1}
|u - P| \leq C r^{2+\beta}\quad  \text{in} \ Q_r(x_0, t_0),
\end{equation}
holds for some $\beta <  \bar \alpha$ and 
\begin{equation*}
F(D^2 P, x_1, t_1) = f(x_1, t_1).
\end{equation*}

Now thanks to \descref{H2}{H2} and the fact that $f \in \hh^{\bar \alpha}$, we have that 
\begin{equation}\label{co2}
F(D^2P, x_0, t_0) - f(x_0, t_0) = O(r^{\bar\alpha}).
\end{equation}
Now from the uniform ellipticity of $F$ and \cref{co2}, we have that with $\tilde P = P +a |x-x_0|^2$, there holds
\begin{equation*}
O(r^{\bar\alpha}) -  n \Lambda |a|\leq F(D^2 \tilde P, x_0, t_0) \leq O(r^{\bar\alpha}) + n \Lambda |a|.
\end{equation*}
Thus there exists a constant $a$  with $|a| = O(r^{\bar \alpha})$ such that  with $\tilde P = P +a |x-x_0|^2$, we have
\begin{equation}\label{co5}
F(D^2 \tilde P, x_0, t_0) = f(x_0, t_0).
\end{equation}

Thus $v = u- \tilde P$ solves 
\[
\tilde F(D^2v, x, t) = f(x, t)\quad \text{in} \ Q_r(x_0, t_0),
\]
where $\tilde F(M, x, t) = F( D^2 \tilde P +M, x, t)$. Also thanks to \cref{co5}, we have that 
\[
\tilde F(0, x_0, t_0) = f(x_0, t_0),
\]
i.e., the normalization condition \cref{norm} as in  \cref{flatness} holds. Moreover since $|a|= O(r^{\bar \alpha})$ and $\bar \alpha >  \beta$, it follows from \cref{de1} and triangle inequality that
\begin{equation*}
|u - \tilde P| \leq C r^{2+\beta}= Cr^{\beta} r^2 \quad \text{in}\  Q_r(x_0, t_0),
\end{equation*}
for a different constant $C$. Now the smallness assumption in \cref{smallness} is verified provided $C \delta_0^{\beta} \leq \delta$. Hence $u \in \hh^{2+\beta_0}(Q_{r/2}(x_0, t_0))$ for some $\beta_0> 0$ by  \cref{flatness}. Without loss of generality we may assume that $\beta_0 < \beta$.  Now in a standard way, one can put together this interior regularity result with the boundary result from  \cref{mn1} above to conclude that $u \in \hh^{2+\beta_0}(\overline{\Omega_{\delta_0} \cap Q_{1/2}})$. 
\end{proof}

\subsection{Full parabolic boundary regularity for time independent domains}
We now show that  \cref{mn2} coupled with a regularity result at initial points due to Lihe Wang as in \cite{W1}  allow us to conclude that under a similar compatibility condition, the solution to the fully nonlinear parabolic equation is in fact continuously twice differentiable in a  neighborhood of the whole parabolic boundary in time independent domains.   To describe the compatibility condition, let us look at the linear case: Let $\Omega = D \times (0, T)$ where $D$ is a $C^{2, \bar \alpha}$ domain in $\mathbb{R}^n$.  Note that for a  solution $u$  to the   classical Dirichlet  problem
\begin{equation*}
\begin{cases}
\Delta u -  u_t =0,
\\
u= g\quad \text{on}  \ \mathcal{P}\Om= D \times \{0\} \cup \partial D \times [0, T],
\end{cases}
\end{equation*}
to be continuously twice  differentiable upto the corner points $\partial_c \Omega= \partial D \times \{0\}$, one requires the following compatibility condition $g_t = \Delta g$ to be satisfied.

In the fully nonlinear setting, the result reads as follows:
\begin{thm}\label{mn4}
Let $u$ be a solution to 
\begin{equation*}
\begin{cases}
F(D^2u, Du, x, t) - u_t = f(x, t)\quad \text{in}\  D \times (0,T],\\
u=g\quad \text{on}\  \partial_p D \times (0, T],
\end{cases}
\end{equation*}
where $F,f,g$ satisfies the assumptions  as in  \cref{mn2} and  $D$ is a $C^{2, \bar \alpha}$ domain in $\mathbb{R}^n$.  Furthermore, for some $\hh^{2+\bar \alpha}$ extension  $\tilde g$ of $g$,  assume that at any point $(x_0, t_0) \in \partial D \times \{0\}$,  the following compatibility condition holds 
\begin{equation}\label{compt}
F(D^2\tilde g, D\tilde g, x_0, t_0) - \tilde g_t = f(x_0, t_0).
\end{equation}
Then   there exists $\delta_0, \beta_0> 0$ such that $u \in \hh^{2+\beta_0}(D_{\delta_0} \times [0, T] \cup D \times [0, \delta_0^2))$ where $D_{\delta_0}= \{x \in D: d(x,D) <  \delta_0\}$.

\end{thm}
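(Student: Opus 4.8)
The plan is to reduce, exactly as in the proofs of \cref{mn1} and \cref{mn2}, to a normalized situation, and then to glue together a boundary $\hh^{2+\beta}$‑decay estimate valid at \emph{every} point of $\mathcal{P}\Om$ near the corner with the interior flatness result \cref{flatness}. First I would subtract an $\hh^{2+\bar\alpha}$‑extension $\tilde g$ of $g$: setting $v=u-\tilde g$, the equation becomes $\tilde F(D^2v,Dv,x,t)-v_t=\tilde f$ with $\tilde F(M,p,x,t):=F(M+D^2\tilde g,p+D\tilde g,x,t)$ and $\tilde f:=f+\tilde g_t$, where $\tilde F$ still obeys $\descref{H1}{H1}$, $\descref{H2}{H2}$ and \cref{as1}, $\tilde f\in\hh^{\bar\alpha}$, and $v\equiv 0$ on $\mathcal{P}\Om$. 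Near a lateral point I flatten $\partial D$ as in Reduction~1 of \cref{mn1}; near a bottom or corner point no flattening is needed since $D$ is time independent and the face $\{t=0\}$ is already flat. Since $Du$ is H\"older continuous up to $\mathcal{P}\Om$ (by \cref{ub} near lateral points, by interior estimates near interior bottom points, and by \cite{W1} near corner points), I may freeze the gradient and assume $F=F(M,x,t)$. Under these reductions the compatibility condition \cref{compt} becomes precisely the normalization $F(0,x_0,0)=f(x_0,0)$ at every corner point $(x_0,0)\in\partial D\times\{0\}$, which is the exact analogue of \cref{norm}.

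The heart of the matter is to establish: for every $(x_1,t_1)\in\mathcal{P}\Om\cap Q_{1/2}$ there are $H\in S(n)$, $B\in\R$ with $F(H,x_1,t_1)-B=f(x_1,t_1)$, and a parabolic Taylor polynomial $P$ of parabolic degree two built from them, such that $|u-P|\le CW(|x-x_1|+|t-t_1|^{1/2})^{2+\beta}$ in $\Omega\cap Q_\rho(x_1,t_1)$ for a fixed $\rho>0$ and some $\beta<\bar\alpha$. At lateral points this is exactly \cref{mn1}. At a bottom point $(x_1,0)$ with $x_1$ interior to $D$, the spatial part of $P$ is the second-order Taylor polynomial of $g(\cdot,0)$ and $B$ is read off from the equation, so the decay is the initial-point $\hh^{2+\beta}$ estimate of Lihe Wang \cite{W1} (no compatibility is needed here, since $B$ can always be defined). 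At a corner point $(x_1,0)\in\partial D\times\{0\}$ one argues as in \cref{mn1}: the difference quotients of $u$ lie in the Pucci class $S(\lambda,\Lambda)$ and vanish on the relevant portions of the parabolic boundary, so the corner analogue of \cref{sl7} — obtained from a Krylov-type $\hh^{1+\alpha}$ estimate up to a corner together with a barrier argument, which is the content of \cite{W1} — yields the decay for the homogeneous equation, and the compactness/iteration scheme of \cref{compact} and \cref{mn1} transfers it to the inhomogeneous equation. It is here that the normalization $F(0,x_0,0)=f(x_0,0)$ from the first paragraph is indispensable: without it the limiting homogeneous problem carries a nonzero constant $F(0,0)$ and no $\hh^{2}$ expansion at the corner can exist.

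With the boundary decay in hand, the interior estimate is obtained verbatim as in the proof of \cref{mn2}. Given $(x_0,t_0)$ in $\big(D_{\delta_0}\times[0,T]\big)\cup\big(D\times[0,\delta_0^2)\big)$, let $(x_1,t_1)$ be a nearest parabolic boundary point and $r:=\tfrac14 d((x_0,t_0),\mathcal{P}\Om)<\tfrac14\delta_0$; since $D$ is time independent one checks $Q_r(x_0,t_0)\subset\Omega$. Using the decay estimate at $(x_1,t_1)$, $\descref{H2}{H2}$, $f\in\hh^{\bar\alpha}$ and uniform ellipticity, correct $P$ by a term $a|x-x_0|^2$ with $|a|=O(r^{\bar\alpha})$ to obtain $\tilde P$ with $F(D^2\tilde P,x_0,t_0)-\tilde P_t=f(x_0,t_0)$ and $|u-\tilde P|\le Cr^{\beta}r^2$ in $Q_r(x_0,t_0)$; choosing $\delta_0$ so small that $C\delta_0^{\beta}\le\delta$, the smallness hypothesis \cref{smallness} holds for $v=u-\tilde P$, so \cref{flatness} gives $u\in\hh^{2+\beta_0}(Q_{r/2}(x_0,t_0))$ with $\|u\|_{\hh^{2+\beta_0}(Q_{r/2}(x_0,t_0))}\le Cr^{-\beta_0}$ for some $\beta_0<\beta$. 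Combining these interior estimates with the boundary decay estimates of the previous paragraph, the standard patching argument used at the end of the proof of \cref{mn2} upgrades everything to $u\in\hh^{2+\beta_0}$ up to $\mathcal{P}\Om$ on $D_{\delta_0}\times[0,T]\cup D\times[0,\delta_0^2)$, which is the asserted neighborhood of the full parabolic boundary.

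I expect the main obstacle to be the corner analysis of the second paragraph: one must produce the corner version of the base homogeneous estimate \cref{sl7} in the quarter space–time region $\{x_n>0\}\cap\{t>0\}$, which requires a Krylov-type $\hh^{1+\alpha}$ estimate up to a corner plus a barrier tailored to that geometry, and one must verify that the compactness scheme of \cref{compact} and \cref{mn1} still closes when the boundary data vanishes on the two faces meeting at the corner. The compatibility condition \cref{compt} is exactly what makes this consistent — it is the precise hypothesis under which the needed corner estimate is available from \cite{W1} — and is therefore indispensable.
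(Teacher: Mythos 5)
Your overall architecture coincides with the paper's: lateral points are handled by \cref{mn1}/\cref{mn2}, a pointwise $\hh^{2+\beta}$ decay is needed at the initial and corner points, and the interior gluing via \cref{flatness} is carried out verbatim as in the proof of \cref{mn2}. The divergence — and the gap — is at the step you yourself flag as the ``heart of the matter.'' You propose to manufacture the corner decay by proving a quarter-space analogue of \cref{sl7} (Krylov-type $\hh^{1+\alpha}$ estimate up to the corner plus a new barrier) and then rerunning the compactness/iteration scheme of \cref{compact} and \cref{mn1} in that geometry; you leave both steps as obstacles to be overcome rather than carrying them out. Since this is precisely where all the new content of \cref{mn4} beyond \cref{mn2} resides, the proposal as written is incomplete there.

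The paper closes this step with a single observation that makes the heavy machinery unnecessary: setting $v=u-\tilde g$, the function $v$ lies in the Pucci class, i.e.\ it satisfies
\begin{equation*}
-v_{t}+\mathcal{M}^{+}_{\lambda,\Lambda}(D^{2}v)+K|Dv|\ \geq\ h\ \geq\ -v_{t}+\mathcal{M}^{-}_{\lambda,\Lambda}(D^{2}v)-K|Dv|,
\end{equation*}
with a right-hand side $h\in\hh^{\bar\alpha}$ that vanishes at the corner points $\partial D\times\{0\}$ by the compatibility condition \cref{compt}. This is exactly the consistency hypothesis of \cite[Theorem 2.14]{W1}, which already delivers the $\hh^{2+\beta}$ decay of $v$ at \emph{every} point of $\overline D\times\{0\}$ — interior bottom points and corner points alike, and for the inhomogeneous problem directly — so no new homogeneous corner lemma, no new barrier, and no second pass through the compactness/iteration argument are needed. (Your treatment of the bottom points already implicitly uses this theorem; the point you missed is that the same citation covers the corners once the consistency condition is verified.) With that substitution your remaining steps — the reduction to $v$, the identification of \cref{compt} with the normalization \cref{norm}, and the gluing with \cref{flatness} — match the paper's proof.
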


\begin{proof}
Note that by our previous result  \cref{mn2}, we already have that $u$ is continuously twice differentiable in a neighborhood of the lateral boundary. Therefore, it suffices to look near  the initial points, i.e. regularity near the points in  $\overline{D} \times \{0\}$. Now thanks to \cref{compt}, we have that $v=u - \tilde g$ satisfies
\begin{equation*}
~~\left\{
\begin{aligned}{}
-u_{t}+\mathcal{M}^{+}_{\lambda,\Lambda}(D^{2}v)+K|Dv|&\geq h(x,t)~\text{in}~Q^{+}_{1},\\
-u_{t}+\mathcal{M}^{-}_{\lambda,\Lambda}(D^{2}v)-K|Dv|&\leq h(x,t)~\text{in}~Q^{+}_{1},
\end{aligned}
\right.
\end{equation*}
where $h \in \hh^{\bar{\alpha}}$ and $h \equiv 0$ at the corner points $\partial D \times \{0\}$. Hence the consistency condition as in  \cite[Theorem 2.14]{W1} is satisfied and we can thus ensure that $u-\tilde g$  has a  $\hh^{2+\beta}$  decay at  every point in $\overline{D} \times \{0\}$.  Using such a decay, we can repeat the arguments as in the proof of  \cref{mn2}    and obtain $\hh^{2+\beta_0}$ regularity near the initial boundary   using the flatness result  from \cref{flatness}.  The conclusion finally follows by a standard covering argument.
\end{proof}


\end{document}